\newcommand{\bx}{{\bf x}}
\newcommand{\by}{{\bf y}}
\newcommand{\bz}{{\bf z}}
\newcommand{\bu}{{\bf u}}
\newcommand{\bv}{{\bf v}}
\newcommand{\bw}{{\bf w}}
\newcommand{\boldell}{\boldsymbol\ell}
\newcommand{\boldtheta}{\boldsymbol\theta}
\newcommand{\boldeta}{\boldsymbol\eta}
\newcommand{\boldxi}{\boldsymbol\xi}
\newcommand*{\citen}[1]{%
  \begingroup
    \romannumeral-`\x 
    \setcitestyle{numbers,square}%
    \cite{#1}%
  \endgroup   
}
\newcommand*{\citem}[1]{%
  \begingroup
    \romannumeral-`\x 
    \setcitestyle{numbers}%
    \cite{#1}%
  \endgroup   
}
\begin{document}

\title{Optimized Data Rate Allocation for Dynamic Sensor Fusion over Resource Constrained Communication Networks}

\author[1]{Hyunho Jung*}

\author[1]{Ali Reza Pedram}

\author[2]{Travis C. Cuvelier}

\author[3]{Takashi Tanaka}

\authormark{JUNG \textsc{et al}}

\address[1]{\orgdiv{Walker Department of Mechanical Engineering}, \orgname{University of Texas at Austin}, \orgaddress{\state{78712 Texas}, \country{U.S.A.}}}


\address[2]{\orgdiv{Department of Electrical and Computer Engineering}, \orgname{University of Texas at Austin}, \orgaddress{\state{78712 Texas}, \country{U.S.A.}}}

\address[3]{\orgdiv{Department of Aerospace Engineering and Engineering Mechanics}, \orgname{University of Texas at Austin}, \orgaddress{\state{78712 Texas}, \country{U.S.A.}}}

\corres{*Hyunho Jung, Department of Mechanical Engineering, University of Texas at Austin, 78712 Texas, U.S.A. \email{jung.hyunho@utexas.edu}}

\fundingInfo{Defense Advanced Research Projects Agency, Grant Number: D19AP00078; National Science Foundation, Award Number: 1944318}


\abstract[Abstract]{This paper presents a new method to solve a dynamic sensor fusion problem. We consider a large number of remote sensors which measure a common Gauss-Markov process. Each sensor encodes and transmits its measurement to a data fusion center through a resource restricted communication network.  The communication cost incurred by a given sensor is quantified as the expected bitrate from the sensor to the fusion center. We propose an approach that attempts to minimize a weighted sum of these communication costs subject to a constraint on the state estimation error at the fusion center. We formulate the problem as a difference-of-convex program and apply the convex-concave procedure (CCP) to obtain a heuristic solution. 
We consider a 1D heat transfer model and a model for 2D target tracking by a drone swarm for numerical studies. Through these simulations, we observe that our proposed approach has a tendency to assign zero data rate to unnecessary sensors indicating that our approach is sparsity-promoting, and an effective sensor selection heuristic.}

\keywords{Control over communications, information theory and control, sensor fusion, sparsity-promoting, wireless sensor network}

\jnlcitation{\cname{
\author{Jung H}, 
\author{Pedram AR}, 
\author{Cuvelier TC},
\author{Tanaka T.}}, 
\ctitle{Optimized Data Rate Allocation for Dynamic Sensor Fusion over Resource Constrained Communication Networks}, \cjournal{Int J Robust Nonlinear Control.}, \cvol{2022;1-27}, \url{doi:10.1002/rnc.6076}}

\maketitle


\section{Introduction}\label{sec:introduction}
In this work, we develop techniques to efficiently allocate communication resources in dynamic sensor fusion over a resource-constrained network. Our principal motivation is networked autonomous systems where the communication between sensors and controllers is wireless. Networked autonomous systems play a major role in the heavy industry, transportation, and  defense sectors. In such settings, wired communication may be impossible due to physical constraints (such as mobility). Wired communication may also be impractical due to the difficulty of routing and maintaining cables. This latter concern is not without consequence; a report from the US Navy  indicates that over 1,000 missions are aborted per year due to wiring faults \citen{navy}. Meanwhile, as control systems become more automated and perform more complex tasks, the number of sensor platforms tends to increase. For example, an Airbus A380-1000 model has about 10,000 sensors on its wings alone \citen{a380}. In wireless systems, communication resources are inherently constrained. Understanding the impact of control systems on wireless networks, and developing strategies to manage wireless resources efficiently subject to the particular requirements of control, will help enable the development of future autonomous systems.  

We first give a high-level overview of our contributions before reviewing the literature. 
\subsection{Our contribution}
We consider a setup where several remote sensor platforms observe a common Gauss-Markov process and forward these observations to a fusion center over reliable point-to-point binary communication channels. The use of these \textit{uplink} channels incurs a communication cost, quantified as the expected bitrate of a uniquely decodable source code. Given the observations it receives, the fusion center forms an estimate of the state. Using a \textit{downlink} channel, which we assume incurs no cost, the fusion center feeds this estimate back to the sensor platforms. This formulation is applicable to a variety of physical layer resource allocation problems as detailed in (Sections \ref{ssec:dfrc} and \ref{ssec:phy}). Our key contributions are presently summarized: 
\begin{itemize}
    \item We formulate a sensor rate allocation problem. Using entropy-coded dithered predictive quantization (cf. \citen{zamir1992}, \citen{tanaka2016}) we show that the expected bitrate required to encode a quantized linear measurement with a specified reconstruction error can be approximated by a particular information measure.
    \item We propose to minimize a weighted average of these bitrates, over both sensors and a time horizon, subject to a constraint on the error covariance of an linear minimum mean square error estimator at the fusion center. This results in a non-convex optimization.
    \item We convert the non-convex optimization to difference-of-convex program \citen{lipp2016} and optimize via the heuristic convex-concave procedure (CCP). While there is no guarantee that the CCP procedure will produce the global minimum, it is guaranteed to produce a feasible local minimum \citen{lipp2016}.
    \item We perform numerical experiments where we apply our approach to a time-invariant heat transfer system and a time-varying drone tracking system. We observed that as the estimator performance constraint is loosened, our algorithm tends to allocate zero data rate to more unneeded sensors. Our proposed approach thus tends to be sparsity-promoting, and may be used as a sensor selection heuristic. 
    \item We provide analytical insight into the sparsity-promoting aspect of our approach via considering limiting examples in the scalar case.  
\end{itemize}

\subsection{Literature review}
A portion of this work appears in \citen{hjConference}. In this manuscript, we motivate our problem formulation more concretely with an application to physical layer resource allocation in wireless communications (cf. Section \ref{ssec:phy}). We also include new, more elucidating simulation results (cf. Section \ref{sec:simulations}). Finally, we conclude with an additional discussion of the sparsity-promoting property of our approach (cf. Section \ref{sec:discussion}). 

The relevant prior work straddles several research areas, including optimal sensor selection and control with communication constraints,  network information theory (in particular, multiterminal rate-distortion theory), and resource allocation in wireless communications. We highlight some relevant contributions from this areas and contrast them with our present work in the remainder of this section. 

\subsubsection{Optimal sensor selection and control with communication constraints }
Optimal sensor selection has been a well-studied problem for several decades. Generally speaking, such approaches aim to choose an optimal subset of sensors from some set. Information measures are often incorporated, via either the objective functions or the constraints, in the optimization over subsets. Early examples include heuristic subset optimizations that attempted to maximize the trace or determinant of the Fisher information matrix (FIM). For example, \citen{kammer1990} proposed a sensor selection method based on ranking sensors at different locations in terms of a surrogate for their contribution to the determinant of the FIM. By an iterative technique, the method removes insignificant sensors, resulting in a selected subset of sensors that tend to maximize the trace and determinant of the FIM. In \citen{yao1992} sensor selection was performed via a genetic algorithm, with the determinant of the FIM as the figure of merit. 

More relevant recent results include those that consider Bayesian estimation/tracking. In \citen{wang2004}, an entropy-based heuristic for sensor selection was introduced and applied to target localization. A semidefinite programming (SDP) relaxation of a sensor selection problem aiming to minimize the determinant of the estimation error covariance matrix was proposed in \citen{joshi2009}. In a target tracking problem, \citen{hoffmann2010} used an estimate of the mutual information between a sensor's observation and the target state  (computed via particle filter) to evaluate a notion of information gain. In \citen{qingli2020}, an optimization method was proposed which maximizes Bayesian Fisher Information and mutual information while minimizing the number of selected sensors. The system model introduced in \citen{mo2011} resembles the one in our present work. In \citen{mo2011}, a fusion center tracks a Gauss-Markov process with incoming observations from multiple remote sensors. A finite horizon optimization problem is proposed to identify a subset of sensors to be transmit their measurements at each time step. The problem is solved heuristically by a re-weighted $\ell_1$ relaxation.  Sensor selection in aircraft engine health monitoring was studied in \citen{liu2016}. In particular, the selection was performed via evaluating entropy.  

In this work we do not consider sensor selection, but rather \textit{sensor rate allocation}. Rather than selecting a subset of sensors, we formulate an optimization problem to minimize a weighted average of data rates from the sensor platforms to the controller. Our present formulation offers additional flexibility; by choosing different weights, one can apply our present formulation to a variety of physical layer resource allocation problems (cf. \ref{ssec:phy}). Notably, we also consider a system model where the sensors have strictly causal feedback access to the fusion center's best estimate. In Section \ref{ssec:dfrc}, we argue that this is a reasonable assumption in several interesting and practical regimes. 

Some of the prior art on control with communication constraints is also relevant to this work. The use of prefix-free coding and entropy-coded dithered quantization (ECDQ) for a control subject to data rate constraints was motivated by \citen{silvaFirst}. The work in \citen{silvaFirst} was extended to the case of MIMO plants by \citen{tanaka2016}. In \citen{stefan2019sparse},  information-regularized control of a distributed system was studied with the objective of reducing inter-subsystem communication while maintaining adequate control cost. The notion of communication cost in \citen{stefan2019sparse} resembles the one of this present work. It was demonstrated that the optimal controller that jointly minimizes the control cost and the required data rate for inter-subsystem communications has a sparse structure. 
While the communication cost in our present work is similar to the one in \citen{stefan2019sparse},  we consider a significantly different scenario---namely a distributed sensing paradigm where multiple independent sensor platforms independently convey their measurements to a fusion center. Our objective is to minimize bitrate while maintaining a constraint on the estimator mean squared error (MSE). 
 
\subsubsection{Multiterminal rate-distortion}\label{ssec:ceoback}
In this work, we address a problem that closely resembles the quadratic Gaussian CEO (Chief Executive/Estimation Officer) problem \citen{ceo}. In the CEO problem, the decoder, corresponding to the ``executive'' (analogous to the fusion center in this work), receives messages from a set of encoders or ``agents''. Each agent observes a measurement of some random state variable that the executive would like to reconstruct. The tradeoff of interest is the sum rate of communication (usually measured in terms of fixed-length coding) versus the distortion in the CEO's estimate. Generally, it is assumed that the agents' measurements are conditionally independent, given the state, and that agents are not allowed to pool their data beforehand. In the ``quadratic Gaussian'' version of this problem (cf. \citen{gaussceo}), agents observe a Gaussian process subject to independent additive Gaussian noise. The distortion metric is (block average) mean squared error. With an eye to distributed tracking, \citen{kostina} analyzed a causal version of the general CEO problem. A rate-distortion formulation is proposed where Massey's directed information (cf. \citen{masseyDI}) is minimized subject to a constraint on the CEO's estimator distortion. This tradeoff is shown to be computable via a convex optimization in the Gaussian case. The rate loss due to the lack of communication between the sensors is also analyzed. 

In this work, we deviate from the formulation of the CEO problem in that we minimize a weighted average of the data rates from sensor platforms to the fusion center, rather than just the sum rate. This is advantageous, for example, in an application to physical layer resource allocation (cf. Section \ref{ssec:phy}). Most coding schemes for the CEO problem use fixed-length coding. We consider zero-delay uniquely decodable variable-length source coding, which may be advantageous in practice. Finally, we consider a setting where the sensors have feedback access to the decoder's estimate.  Unfortunately, our lower bounds on codeword length are not completely general (they are with respect to a particular quantization scheme), and we have as-yet been unable to convexify the relevant optimization problem we derive. 

It is worth mentioning that the CEO problem and the problem considered in this manuscript are distinct from the standard vector Gaussian rate-disortion problem (which admits the well known reverse waterfilling solution). Both the CEO problem and the problem considered in this manuscript are inherently sequential; they consider tracking a dynamical system rather than and memoryless source. Even with a time-horizon of $1$, the CEO problem and the scenario under investigation are fundamentally distributed source coding problems; in standard vector Gaussian rate-distortion, the compressor has access to the entire source vector. In contrast, in the present work we assume that a set of independent compressors each has access to a scalar, linear, and potentially noisy measurement of the source vector.

\subsubsection{Resource allocation for control, estimation, and detection over wireless networks} Ultra-low latency reliable wireless communication (ULLRC) has been proposed as an enabler of next-generation networked autonomous systems---in particular for control systems using wireless communication \citen{yilmaz} \citen{durisi2016toward}. In the 5G cellular context, ULLRC is characterized by very short end-to-end latencies (on the order of a millisecond) and extremely small error probabilities (on the order of $10^{-5}$) \citen{durisi2016toward}. In \citen{ullrc_prediction} a prediction-communication co-design approach was proposed with a network topology similar to the one considered in our present manuscript. A non-convex optimization was introduced to minimize the bandwidth required to decode a fixed-length message subject to constraints on block error probability and the probabilities of exceeding the latency or prediction error constraints.  This approach was shown to effectively improve the tradeoff between reliability and performance (measured in terms of the aforementioned error rate metrics). In this work we consider a different notion of performance, namely estimator MSE. Our scheme is also able to dynamically allocate wireless resources by means of effective source coding; we optimize the actual number of bits conveyed from the sensors to the fusion center. In contrast to \citen{ullrc_prediction}, we do not include finite-blocklength channel coding in our analysis. In \citen{rebal}, a general feedback control system using ULLRC is abstracted analogously to the CEO problem. The executive is assumed to operate on a fixed schedule and is required to produce its actions by a hard real-time deadline. Different agents make asynchronous linear/Gaussian measurements of a Gauss-Markov dynamical system, and transmit their measurements to the executive over a shared communication channel. The executive's estimator performance is measured in terms of MSE. Polling an observer incurs a communication cost  (assumed known a priori), or \textit{airtime}, and a branch and bound approach is applied to schedule the optimal sequence of observers that can be polled before the deadline. Our present work could be seen as optimizing the total (expected) airtime required to achieve some estimator performance (cf. Sec. \ref{ssec:phy}). 

The recent survey \citen{procSmartManufacturing} on control systems incorporating wireless communication contains many relevant references, several practical case studies, and a detailed overview of a framework for control/communication co-design. It is argued that wireless systems for control applications should be dependable (measured in terms of control performance and stability), adaptive (reconfigurable), and efficient (with respect to the use of resources like time, bandwidth, and power). A control-guided communication approach was proposed where messages are transmitted according to a self-triggered protocol. The approach was shown to be effective through experiments in a cyber-physical testbed. In this work, we consider synchronous communication, which may be a necessity for control over congested networks. Our sensor rate allocation scheme is also inherently dynamic; in simulation, we demonstrate that different subsets of sensors are assigned nonzero bitrates over time. Furthermore, our choice to minimize bitrate allows us to draw a direct link between the utilization of physical layer resources and estimator performance, improving efficiency (at least on average). 

Some prior work on sensor fusion for decentralized detection is particularly relevant to our present investigation. In \citen{tcomm_detection_wsn}, a detection problem is considered where samples of a continuous-time signal are quantized at different rates over time. Given a sampling strategy and a constraint on total bitrate, different fixed-length quantizers are designed for each temporal sample using a criterion based on minimizing a notion of information loss. The approach in \citen{tcomm_detection_wsn} quantifies information loss in terms of general $f$-divergences. In \citen{TAES_detection_wsn} several sensor platforms observe noisy scalar measurements of some deterministic unknown parameter. These measurements are conveyed to a fusion center whose goal is to test a hypothesis based on the deviation of the parameter from some known normal condition. A class of ``smart'' sensors transmits unquantized measurements over error free channels, meanwhile a class of ``dumb'' sensors transmits (fixed-length) quantized measurements over binary symmetric channels. Both a Generalized Likelihood Ratio Test (GLRT) and a (computationally simpler) Rao test for the aformentioned model are derived and analyzed, and an optimal quantizer design is pursued through nonconvex optimization. In \citen{TSPN_detection_wsn} a similar setup is considered with the measurement model extended to include  (possibly) nonlinear, noisy, vector measurements of the deterministic unknown parameter. In \citen{TSPN_detection_wsn}, each sensor can apply a linear precoder to its measurement before performing a fixed-length quantization. The quantized measurements are conveyed to the fusion center over a noisy binary symmetric channel. Again, both a GLRT and a (computationally simpler) Rao test are derived and analyzed. In particular, an asymptotic performance characterization lead to an optimal design for a one-bit quantizer (given some choice of precoder). The one-bit results are extended to multi-bit quantizers, and numerical study was performed to demonstrate that the Rao test achieves comparable performance to the GLRT.

In contrast to \citen{tcomm_detection_wsn}, \citen{TAES_detection_wsn}, and \citen{TSPN_detection_wsn} the focus of our present manuscript is on estimation and tracking of a dynamical system as opposed to detection. Furthermore, we consider variable-length source coding, in contrast to fixed-length quantization.


\subsection{Organization and notation}
This paper is organized as follows. Section \ref{sec:formulation} reviews ECDQ, defines the notion of communication cost, and connects this cost to the mutual information (MI) between two Gaussian random variables that arise in Kalman filtering. The sensor rate allocation (SRA) problem is proposed. We conclude the section with an application example motivated by physical layer resource allocation in a remote sensing scenario. In Section \ref{sec:doc}, we show that the communication cost can be written in terms of error covariance matrices from Kalman filtering. In these terms, we convert the SRA optimization to a difference-of-convex program. Section \ref{sec:ccp} defines  
an iterative heuristic algorithm based on the convex-concave procedure (CCP) to attack the resulting non-convex optimization. Our numerical studies are introduced in Section \ref{sec:simulations}. We provide insight into the sparsity-promoting nature of our approach in Section \ref{sec:discussion}. We conclude in Section \ref{sec:conclusion}. 

Lower case boldface symbols such as $\bx$ denote random variables and use the notation $\bx_{1:t}=(\bx_1, ... , \bx_t)$ for subsets of random variables from some discrete-time random process. We use standard information-theoretic notation from \citen{cover2012elements}: the entropy of a discrete random variable $\bx$ is denoted by $H(\bx)$, while the differential entropy of a continuous random variable $\bx$ is denoted by $h(\bx)$. The mutual information between $\bx$ and $\by$ is denoted by $I(\bx;\by)$, and the relative entropy is denoted by $D(\cdot \| \cdot)$. $\mathbb{S}^n$ represents the set of symmetric matrices of size $n\times n$. $X\in\mathbb{S}_+^n$ or $X\succeq 0$  indicates that $X$ is a positive semidefinite matrix, and  $X\in\mathbb{S}_{++}^n$ or $X\succ 0$  represents that $X$ is a positive definite matrix. We denote the set of natural numbers $\{1, 2, \dots, M\}$ as $1:M$, and the set of positive natural numbers as $\mathbb{N}^+$.

\section{Problem Formulation}\label{sec:formulation}
We consider discrete-time estimation of a Gauss-Markov source and assume the source is an $n$-dimensional random process defined via 
\begin{equation}
\label{eq:source}
\bx_{t+1}=A\bx_t+F\bw_t, \;\; \bw_t \stackrel{i.i.d.}{\sim} \mathcal{N}(0,I), \;\; t=1, 2, ... ,T
\end{equation}
with initial condition $\bx_1\sim \mathcal{N}(0, P_{1|1})$, where $P_{1|1}\in \mathbb{S}_+^n$ and $A\in\mathbb{R}^{n\times n}$.
We consider remote estimation  over a star-like sensor network, as shown in  Fig~\ref{fig:network}. Each of the  $M$ sensors makes a 
noisy, scalar, linear measurement of the state vector. At time $t$, sensor $i$ observes a linear measurement of the plant with additive Gaussian noise. Denote the sensor's measurement matrix by $C_{i}\in\mathbb{C}^{1\times n}$ and let $U_{i} \in \mathbb{S}^n_+$ and denote the measurement noise covariance. Let the measurement noise at the platform be given by $\bu_{i,t}\sim \mathcal{N}(0, U_{i})$ i.i.d over time. The $i^{\mathrm{th}}$ sensor platform observes $\by_{i,t}=C_i\bx+ \bu_{i,t}$. Concatenating the measurements from each sensor into $\mathbf{y}_{t}\in\mathbb{C}^{M}$, we have 
\begin{subequations}\label{eq:measurements}
\begin{align}
    \by_t=C\bx_t + \bu_{t},
\end{align} where 
\begin{equation}
\by_t=\begin{bmatrix}\by_{1,t} \\ \vdots \\ \by_{M,t}\end{bmatrix},  \;\;
C=\begin{bmatrix}C_1 \\ \vdots \\ C_M \end{bmatrix},\;\;
\bu_t=\begin{bmatrix}\bu_{1,t} \\ \vdots \\ \bu_{M,t}\end{bmatrix}.
\end{equation}
\end{subequations} We assume that the noises at different sensors are independent, e.g. we assume that for all $t$
\begin{IEEEeqnarray}{rCl}\label{eq:umatdef}
   U &=& \mathbb{E}(\bu_t\bu^{\mathrm{T}}_t)\\ &=& \begin{bmatrix}
    U_1 & 0 & \dots & 0 \\
    0 & U_2 & \dots & 0 \\
    \vdots & \vdots & \ddots & \vdots \\
    0 & 0 & \dots & U_M 
\end{bmatrix}.\nonumber
\end{IEEEeqnarray}
We assume that sensors must convey these measurements to the fusion center over a resource-constrained communication network, as detailed in the sequel. Throughout the paper, we assume that the pair (A,C) is observable. 

\subsection{Data fusion over resource-constrained network}\label{ssec:dfrc}
We assume the platforms operate synchronously in discrete-time, analogous to a CAN-bus-like system. At time $t$, we assume that sensor $i$ encodes $\by_{i,t}$ into a binary codeword, say $\bz_{i,t}$, from a uniquely decodable variable-length code. The length of the codeword $\bz_{i,t}$, in bits, is denoted  $\ell_{i,t}$. The expected length of $\ell_{i,t}$ measures the bitrate of uplink communication from the $i^{\mathrm{th}}$ sensor platform to the fusion center, and provides a notion of communication cost. In the CAN protocol, a packet frame consists of a header and a tailer in addition to the data field. For the sake of simplicity, we neglect the contributions from these components to the network's bitrates. We assume that the uplink communication is \textit{reliable} in the sense that the packet $\bz_{i,t}$ is received without error before time $t+1$. 

Having received  packets from each of the $M$ platforms, the fusion center decodes packets and computes the linear minimum mean-square error (LMMSE) estimate $\hat{\bx}_{t|t}$. The fusion center also calculates a step-ahead prediction $\hat{\bx}_{t+1|t}$ based on least mean-square-error estimate $\hat{\bx}_{t|t}$ and the source process \eqref{eq:source}. Once the prediction $\hat{\bx}_{t+1|t}$ is calculated, the fusion center transmits the prediction back to all the sensors as shown in Fig.~\ref{fig:network}. Access to $\hat{\bx}_{t+1|t}$ at the sensor platforms allows them to apply  \emph{predictive quantization} at time step $t+1$. Quantization and encoding the \emph{innovation} $\by_{i,t+1}-C_{i}\hat{\bx}_{t+1|t}$, rather than $\by_{i,t+1}$ directly, helps  to reduce the expected codeword length (cf. \citen{yuksel2013} and \citen{tanaka2016}).

We neglect the communication cost associated with the downlink communication and assume that the sensors receive $\hat{\mathbf{x}}_{t+1|t}$ exactly. These assumptions are appropriate in a variety of circumstances. In one such scenario, the remote sensor platforms could be battery-powered meanwhile the fusion center could have direct access to power lines. If the communication between the sensors and the fusion center is wireless, the sensor platforms could have strictly constrained transmit powers meanwhile the fusion center could have no such constraint. In such a setting, the fusion center could encode an extremely finely quantized version of $\hat{\mathbf{x}}_{t+1|t}$ and use an extremely high power to transmit this message reliably to all of the $M$ sensor platforms. In a scenario with time-division duplexing, the uplink and downlink time-share the same frequency channel. As we discuss in Section \ref{ssec:phy}, a reasonable notion of communication ``cost'' in terms of physical layer resources used is the amount of time the frequency channel is occupied by uplink and downlink transmissions. We discuss this in more detail in Section \ref{ssec:phy}, however, at this stage it suffices to recognize that if the downlink transmit power is unconstrained, an arbitrary number of bits can be conveyed from the fusion center to the sensor platforms in a minimal amount of time.

It is also reasonable to neglect the downlink communication cost in a wireless communication scenario where each remote sensor platform can easily ``overhear'' (e.g. decode reliably) the transmissions from other platforms ostensibly directed to the fusion center. Formally, this is possible when, for each remote platform, the multicast capacity of the channel from the given platform to the others exceeds the channel capacity from the given platform to the fusion center. Essentially, this means that the channel capacity between any two platforms exceeds the capacity between any one platform and the fusion center. This occurs, for example, in a line-of-sight communication setting where the distance between the individual sensor platforms is small compared with the distance between the platforms and the fusion center. In this case, each user can form the estimate $\hat{\mathbf{x}}_{t+1|t}$ simply based on the transmissions it overhears on the uplink---no downlink communication is actually required.

\begin{figure*}[t]
    \centering
    \includegraphics[width=1.0\columnwidth]{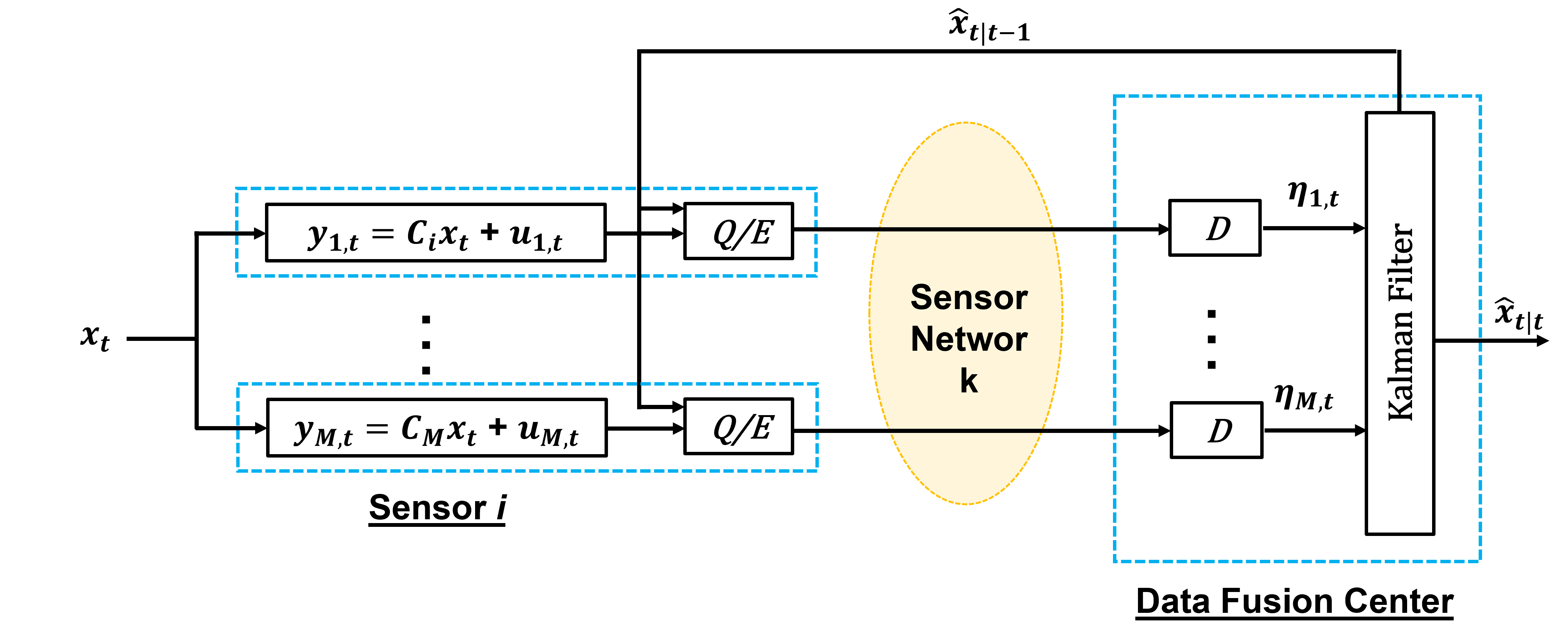}
    \caption{Data fusion center and distributed sensors. ``Q/E'' represents the quantizer/encoder module whereas ``D'' represents  the decoder module.}
    \label{fig:network}
\end{figure*}

\subsection{Entropy-coded dithered quantization}\label{subsec:quantDef}
We assume that sensor platforms discretize, and encode their measurements into binary codewords, using  \emph{entropy-coded dithered quantization} (ECDQ) \citen{derpich2012}. Define a uniform scalar quantizer with sensitivity $\Delta$ via 
\begin{align}
Q_{\Delta}&(z)=k\Delta 
\text{ if } (k-\tfrac{1}{2})\Delta\leq z < (k+\tfrac{1}{2})\Delta.
\end{align} Essentially, the function $Q_{\Delta}$ rounds its input to the nearest multiple of $\Delta$. \textit{Dithering} introduces intentional randomness into the quantization process to make the quantization error tractable. Let $\mathbf{z}$ be a random variable that we would like to encode, and let $\boldxi\sim\text{unif}[-\frac{\Delta}{2},\frac{\Delta}{2}]$ independent of $\mathbf{z}$. Define the \textit{quantization}
\begin{align}\label{eq:quant}
    \mathbf{q} = Q_{\Delta}&(\mathbf{z}+\boldxi).
\end{align} In general, $\mathbf{q}$ is a discrete random variable with countable support. Define the \textit{reconstruction} 
\begin{align}\label{eq:recon}
    \boldeta =  \mathbf{q}-\boldxi. 
\end{align} It can be shown (cf. \citen{zamir1992},  \citen{tanaka2016}) that 
\begin{align}\label{eq:dith}
    \boldeta  = \mathbf{z}+\mathbf{v}
\end{align} where $\mathbf{v}\sim\text{unif}[-\frac{\Delta}{2},\frac{\Delta}{2}]$ and independent of $\mathbf{z}$. Note that the \textit{same} realization of the dither signal $\boldxi$ is used in the quantization and reconstruction steps. 

\begin{figure*}[t]
    \centering
    \includegraphics[width=1\textwidth]{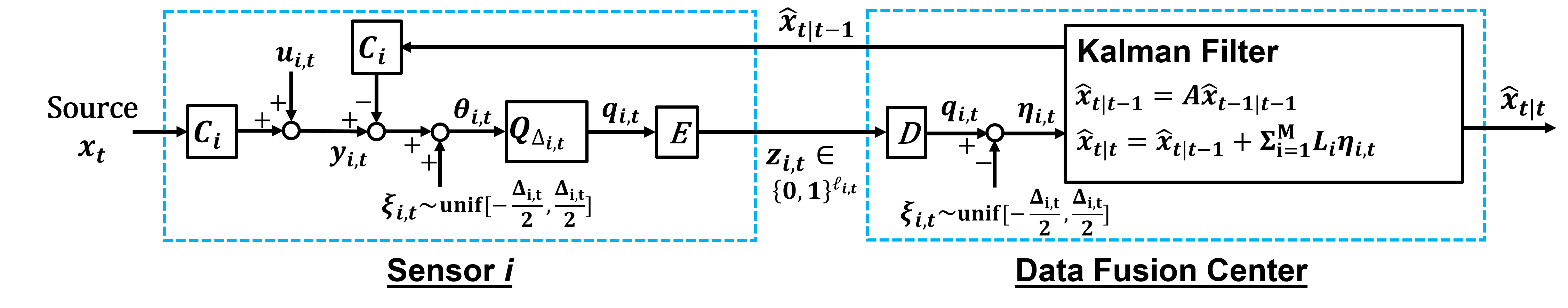}
    \caption{This figure gives an overview of the communication architecture between an individual sensor platform and the fusion center. Note that the dither signal, $\boldxi_{i,t}$ is assumed to be known at both the sensor platform and the fusion center. In practice, this “shared randomness'' can be accomplished by using synchronized pseudorandom number generators between each sensor platform and the fusion center.}
    \label{fig:channel1}
\end{figure*}

Fig.~\ref{fig:channel1} illustrates the use of dithering in our communication network model. We assume that each sensor platform shares a common dither signal with fusion center. At time $t$, denote the $i^{\mathrm{th}}$ platform's dither signal $\boldxi_{i,t}$. The stochastic process (defined over time and platforms) $\{\boldxi_{i,t}\}_{i\in1:M,t\in \mathbb{N}^+}$ consists of mutually independent uniform random variables 
\begin{align}
    \boldxi_{i,t}\sim \text{unif}[-\frac{\Delta_{i,t}}{2}, \frac{\Delta_{i,t}}{2}]
\end{align} where the sensitivities $\Delta_{i,t}$ will be designed in the sequel. Define the dither processes to be independent of $\{\mathbf{x}_{t},\mathbf{w}_t,\mathbf{u}_{1:M,t}\}_{t\in \mathbb{N}_{+}}$. At time $t$, platform $i$ computes the Kalman innovation corresponding to its measurement via
\begin{align}
\boldtheta_{i,t}=\by_{i,t}-C_i\hat{\bx}_{t|t-1}. 
\end{align} It then computes the dithered quantization \begin{align*}
\mathbf{q}_{i,t} = Q_{\Delta_{i,t}}&(\boldtheta_{i,t}+\boldxi_{i,t}).
\end{align*} The sensor platform uses \textit{lossless} variable-length \textit{entropy coding} to encode $\mathbf{q}_{i,t}$ into a finite-length binary string. Denote this \textit{codeword} $\bz_{i,t}$. The length of $\bz_{i,t}$ is a random variable, denote $\boldell_{i,t}$. It can be shown that there exists a lossless source code, adapted to the conditional probability mass function of $\mathbf{q}_{i,t}$ given $\boldxi_{i,t}$, with expected codeword length satisfying  (cf. [\citem{cover2012elements}, Theorem 5.5.1 and Exercise 5.28])
\begin{align}\label{eq:entropy} 
H(\mathbf{q}_{i,t}|\boldxi_{i,t})\leq \mathbb{E}(\boldell_{i,t}) < H(\mathbf{q}_{i,t}|\boldxi_{i,t})+1.
\end{align} Alternative, more tractable bounds on codeword length are developed in the next section. Since the binary communication channel from the sensor platform to the fusion center is assumed to be reliable the sensor platform receives $\bz_{i,t}$ exactly. Since the entropy code is lossless, the decoder at the fusion center recovers $\mathbf{q}_{i,t}$ exactly. The decoder then computes the dithered reconstruction
\begin{align}
    \boldeta_{i,t}=\mathbf{q}_{i,t}-\boldxi_{i,t}. 
\end{align} Using (\ref{eq:quant},\ref{eq:recon},\ref{eq:dith}) we have 
\begin{align}\label{eq:last22}
    \boldeta_{i,t}=\boldtheta_{i,t}+\bv_{i,t}\text{, where }\bv_{i,t}\sim \text{unif}[-\frac{\Delta_{i,t}}{2}, \frac{\Delta_{i,t}}{2}].
\end{align} It can be shown that the random variables  $\{\bv_{i,t}\}_{i\in1:M,t\in\mathbb{N}^+}$ are mutually independent and independent of $\{\boldtheta_{i,t}\}_{i\in1:M,t\in\mathbb{N}^+}$ \citen{tanaka2016}\citen{zamir1992}. An end-to-end model of entropy-coded dithered quantization is shown in Fig~\ref{fig:channel2}. The joint distributions of the random variables in Fig~\ref{fig:channel2} are identical to those in Fig~\ref{fig:channel1}.  

\begin{figure*}
    \centering
    \includegraphics[width=1\textwidth]{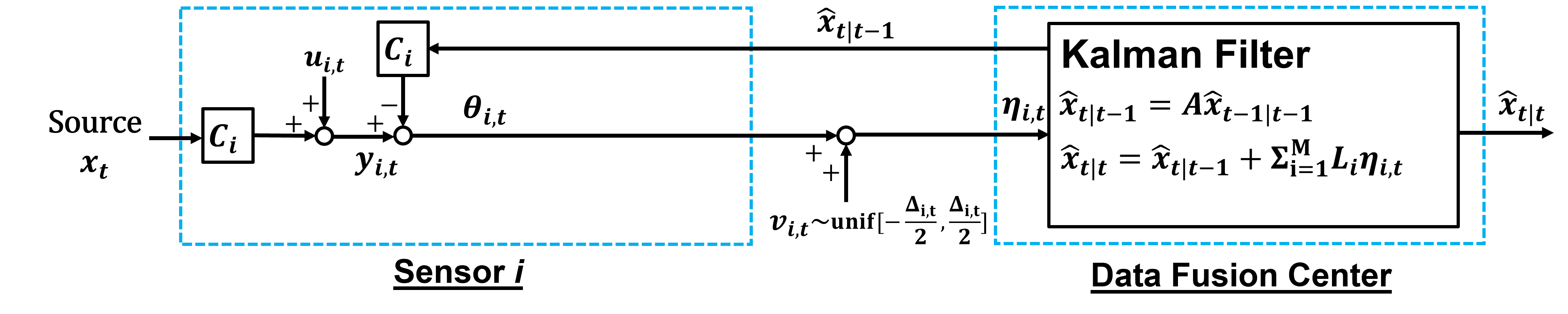}
    \caption{The net effect of the ECDQ scheme is that, from each sensor platform, the fusion center receives a linear measurement corresponding to the platform's Kalman innovation corrupted by additive \textit{uniform} noise. }
    \label{fig:channel2}
\end{figure*}

\begin{figure*}
    \centering
    \includegraphics[width=1\textwidth]{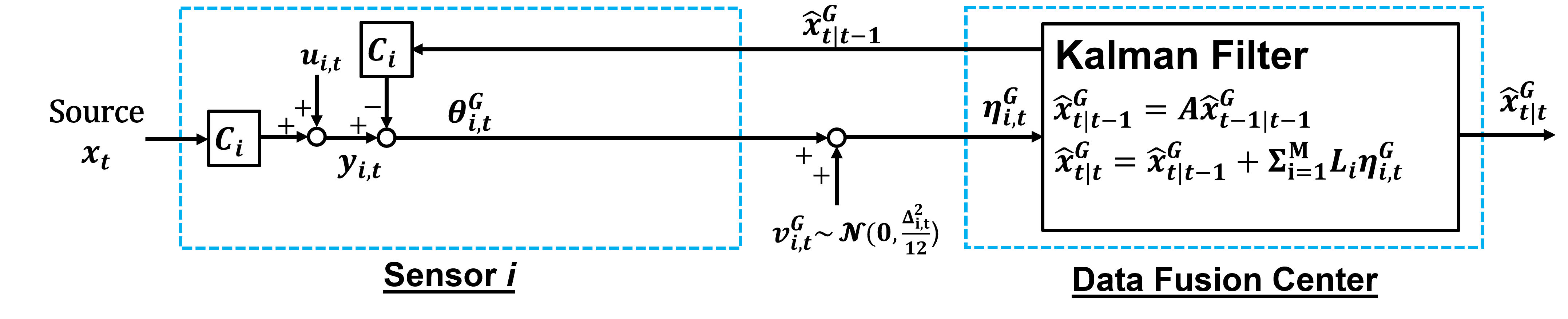}
    \caption{A ``jointly Gaussian'' version of the architecture shown in Fig.~\ref{fig:channel2}. The uniform noise in Fig.~\ref{fig:channel2} is replaced with Gaussian noise with the same first and second-order statistics. The process defined by the random variables in this figure is jointly Gaussian and has identical first and second moments to the process defined by the random variables in Fig.~\ref{fig:channel2}. }
    \label{fig:channel3}
\end{figure*}

\subsection{Communication cost approximation}\label{subsec:comcosap}
We define the communication cost associated with the $i^{\mathrm{th}}$ sensor over some time horizon $T$ as the time-averaged expected codeword length, e.g.
\begin{align}\label{eq:firstcost}
    R_i=\frac{1}{T}\sum_{t=1}^T \mathbb{E}(\boldell_{i,t}).
\end{align} The communication cost (\ref{eq:firstcost}) can be interpreted as the time-average rate of reliable (e.g. error-free) communication required to convey the quantized measurements $\{\mathbf{q}_{i,t}\}_{t=1}^{T}$ (defined in Section \ref{subsec:quantDef}) from the $i^\mathrm{th}$ sensor to the fusion center.  Under several reasonable architectures from wireless communications, this cost is an effective surrogate for the (expected) amount of physical layer resources consumed by the $i^{\mathrm{th}}$ platform on the uplink (cf. Section \ref{ssec:phy}). Equation (\ref{eq:entropy}) gave a bound on $\mathbb{E}(\boldell_{i,t})$ in terms of a conditional entropy. In the following lemma, we express these bounds in terms of a mutual information. 
\begin{lemma}
\label{lem:rate}For every sensor $i$ and time step $t$, the expected codeword length satisfies 
\[
I(\boldtheta_{i,t};\boldeta_{i,t})\leq \mathbb{E}(\boldell_{i,t})<I(\boldtheta_{i,t};\boldeta_{i,t})+1
\]
where the mutual information is taken with respect to the joint distribution of $\boldtheta_{i,t}$ and $\boldeta_{i,t}$ induced by the model in Fig.~\ref{fig:channel2}.
\begin{proof}
 See Appendix A.
\end{proof}
\end{lemma}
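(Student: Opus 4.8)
The plan is to combine the entropy bound \eqref{eq:entropy} with the single algebraic identity
\[
H(\mathbf{q}_{i,t}\mid\boldxi_{i,t}) = I(\boldtheta_{i,t};\boldeta_{i,t}),
\]
where the mutual information is evaluated under the model of Fig.~\ref{fig:channel2}. Once this identity is in hand, substituting it into both sides of \eqref{eq:entropy} immediately yields the claimed sandwich $I(\boldtheta_{i,t};\boldeta_{i,t}) \le \mathbb{E}(\boldell_{i,t}) < I(\boldtheta_{i,t};\boldeta_{i,t})+1$. Throughout I fix $i$ and $t$, abbreviate $\Delta=\Delta_{i,t}$, and work in base-$2$ logarithms so that the $+1$ in \eqref{eq:entropy} is consistent with a codeword length measured in bits. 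I would establish the identity by evaluating each side separately in terms of the differential entropy $h(\boldeta_{i,t})$ of the reconstruction and showing that both equal $h(\boldeta_{i,t})-\log_2\Delta$.

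For the right-hand side I would use the end-to-end characterization \eqref{eq:last22}, namely $\boldeta_{i,t}=\boldtheta_{i,t}+\bv_{i,t}$ with $\bv_{i,t}\sim\mathrm{unif}[-\tfrac{\Delta}{2},\tfrac{\Delta}{2}]$ independent of $\boldtheta_{i,t}$. Then $I(\boldtheta_{i,t};\boldeta_{i,t}) = h(\boldeta_{i,t}) - h(\boldeta_{i,t}\mid\boldtheta_{i,t}) = h(\boldeta_{i,t}) - h(\bv_{i,t}) = h(\boldeta_{i,t}) - \log_2\Delta$, where independence reduces the conditional differential entropy to $h(\bv_{i,t})$ and a uniform variable on an interval of length $\Delta$ has differential entropy $\log_2\Delta$.

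The substantive step is the left-hand side. Conditioned on $\boldxi_{i,t}=\xi$, the quantizer output $\mathbf{q}_{i,t}$ is supported on the lattice $\Delta\mathbb{Z}$ with conditional mass $p_k(\xi)=\Pr(\mathbf{q}_{i,t}=k\Delta\mid\boldxi_{i,t}=\xi)$, and since $\boldtheta_{i,t}\perp\boldxi_{i,t}$ this is an integral of the density of $\boldtheta_{i,t}$ over the cell $[(k-\tfrac12)\Delta-\xi,(k+\tfrac12)\Delta-\xi)$. I would rewrite the cell probability in terms of the reconstruction density as $p_k(\xi)=\Delta\, f_{\boldeta_{i,t}}(k\Delta-\xi)$ and then form $H(\mathbf{q}_{i,t}\mid\boldxi_{i,t})=\tfrac{1}{\Delta}\int_{-\Delta/2}^{\Delta/2}\bigl(-\sum_k p_k(\xi)\log_2 p_k(\xi)\bigr)\,d\xi$. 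The key maneuver, which I expect to be the main obstacle to state cleanly, is the change of variables $\eta=k\Delta-\xi$: as $k$ ranges over $\mathbb{Z}$ and $\xi$ over the single cell $[-\tfrac{\Delta}{2},\tfrac{\Delta}{2}]$, the point $\eta$ sweeps each real number exactly once, so the lattice sum and the cell integral collapse into one integral over $\mathbb{R}$. This gives $H(\mathbf{q}_{i,t}\mid\boldxi_{i,t}) = -\int_{\mathbb{R}} f_{\boldeta_{i,t}}(\eta)\log_2\!\bigl[\Delta f_{\boldeta_{i,t}}(\eta)\bigr]\,d\eta = h(\boldeta_{i,t})-\log_2\Delta$.

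Comparing the two computations yields $H(\mathbf{q}_{i,t}\mid\boldxi_{i,t})=I(\boldtheta_{i,t};\boldeta_{i,t})$, and feeding this into \eqref{eq:entropy} completes the argument. The only delicate point is justifying the tiling/change-of-variables (ensuring $\boldtheta_{i,t}$ admits a density so that $f_{\boldeta_{i,t}}$ is well defined and the interchange of sum and integral is valid); alternatively, this identity is precisely the standard subtractive-dither fact of \cite{zamir1992},\cite{tanaka2016}, which could be invoked directly to bypass the density calculation.
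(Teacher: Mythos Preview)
Your proposal is correct and follows essentially the same route as the paper: both arguments reduce to the identity $H(\mathbf{q}_{i,t}\mid\boldxi_{i,t})=I(\boldtheta_{i,t};\boldeta_{i,t})$ and then substitute it into \eqref{eq:entropy}. The only difference is that the paper invokes this identity by citation (via the intermediate step $H(\mathbf{q}_{i,t}\mid\boldxi_{i,t})=H(\boldeta_{i,t}\mid\boldxi_{i,t})$ and then \cite{zamir1992},\cite{tanaka2016}), whereas you supply a self-contained derivation through the tiling/change-of-variables computation---which is precisely the standard proof of the cited fact.
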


Note that $\boldtheta_{i,t}$ and $\boldeta_{i,t}$ are non-Gaussian. Directly evaluating the mutual information $I(\boldtheta_{i,t};\boldeta_{i,t})$ is rather difficult. In \citen{silva2009}, an approach to evaluating this mutual information was proposed via defining a ``jointly Gaussian'' version of the architecture in Fig.~\ref{fig:channel2}, shown here in Fig.~\ref{fig:channel3}. Define \begin{align}
    V_{i,t}  = \frac{\Delta^2_{i,t}}{12}.
\end{align} Essentially, the uniform quantization noise $\bv_{i,t}$ in Fig.~\ref{fig:channel2} is replaced with additive Gaussian noise, independent over sensors and time, with the same means and covariances, i.e. 
\begin{align}
    \bv_{i,t}^\text{G}\sim\mathcal{N}(0,V_{i,t})\text{, independent over $t$, $i$, and independent of $\{\mathbf{x}_{t}\}_{t\in\mathbb{N}^+}$}. 
\end{align} Under this model, the measurements and reconstructions are jointly Gaussian, and denoted $\boldtheta_{i,t}^\text{G}$ and $\boldeta_{i,t}^\text{G}$ respectively. Assume that the dynamical systems in Fig.~\ref{fig:channel2} and Fig.~\ref{fig:channel3} have initial conditions with identical first and second moments. It follows from the linearity of the models in Fig.~\ref{fig:channel2} and Fig.~\ref{fig:channel3}, and in particular the fact that the Kalman filter recursions are identical, that the stochastic processes $\{\bx_{t|t}^\text{G}, \bx_{t|t-1}^\text{G}, \boldtheta_t^\text{G}, \boldeta_t^\text{G}\}_{t=1,2,..., T}$ and $\{\bx_{t|t}, \bx_{t|t-1}, \boldtheta_t, \boldeta_t\}_{t=1,2,..., T}$ have identical first and second moments. 

The next lemma shows that  $I(\boldtheta_{i,t}; \boldeta_{i,t})$ can be bounded to within two bits of $I(\boldtheta_{i,t}^\text{G}; \boldeta_{i,t}^\text{G})$.
\begin{lemma}
\label{lem:silva}
\[
I(\boldtheta_{i,t}^\text{G}; \boldeta_{i,t}^\text{G})\leq 
I(\boldtheta_{i,t}; \boldeta_{i,t})<I(\boldtheta_{i,t}^\text{G}; \boldeta_{i,t}^\text{G})+\frac{1}{2}\log\frac{2\pi e}{12}.
\]
\end{lemma}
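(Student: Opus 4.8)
The plan is to reduce both mutual informations to differences of differential entropies and then compare them term by term. In both Fig.~\ref{fig:channel2} and Fig.~\ref{fig:channel3} the reconstruction is the input plus \emph{independent} additive noise, $\boldeta_{i,t}=\boldtheta_{i,t}+\bv_{i,t}$ and $\boldeta_{i,t}^\text{G}=\boldtheta_{i,t}^\text{G}+\bv_{i,t}^\text{G}$, so the noise is independent of the input and I may write $I(\boldtheta_{i,t};\boldeta_{i,t})=h(\boldeta_{i,t})-h(\bv_{i,t})$ and $I(\boldtheta_{i,t}^\text{G};\boldeta_{i,t}^\text{G})=h(\boldeta_{i,t}^\text{G})-h(\bv_{i,t}^\text{G})$. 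The constant in the lemma is exactly the non-Gaussianness of the uniform noise: since $\bv_{i,t}$ is uniform on an interval of length $\Delta_{i,t}$ while $\bv_{i,t}^\text{G}\sim\mathcal{N}(0,\Delta_{i,t}^2/12)$ has the same variance, a direct computation gives $h(\bv_{i,t}^\text{G})-h(\bv_{i,t})=\tfrac12\log\tfrac{2\pi e}{12}$. Because the two models share first and second moments, $\boldeta_{i,t}$ and $\boldeta_{i,t}^\text{G}$ have the same variance, and $\boldeta_{i,t}^\text{G}$ is precisely the Gaussian carrying that variance.

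Next I would collapse the comparison into a single relative-entropy term. Writing $\phi$ for the Gaussian density having the same mean and variance as $\boldeta_{i,t}$ (which is the law of $\boldeta_{i,t}^\text{G}$), the standard identity $h(\boldeta_{i,t}^\text{G})-h(\boldeta_{i,t})=D(p_{\boldeta_{i,t}}\|\phi)$ yields
\[
I(\boldtheta_{i,t};\boldeta_{i,t})-I(\boldtheta_{i,t}^\text{G};\boldeta_{i,t}^\text{G})=\tfrac12\log\tfrac{2\pi e}{12}-D\!\left(p_{\boldeta_{i,t}}\,\big\|\,\phi\right).
\]
The upper bound follows immediately from $D(\cdot\|\cdot)\ge 0$, and it is \emph{strict} because $\boldeta_{i,t}$ carries a uniform component: its characteristic function inherits the zeros of the sinc, so $\boldeta_{i,t}$ cannot be Gaussian and hence $D(p_{\boldeta_{i,t}}\|\phi)>0$.

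The lower bound is the crux. By the displayed identity it is equivalent to $D(p_{\boldeta_{i,t}}\|\phi)\le\tfrac12\log\tfrac{2\pi e}{12}$, i.e. the output $\boldeta_{i,t}$ must be \emph{no more non-Gaussian than the uniform noise itself}. To establish this I would invoke the entropy power inequality on $\boldeta_{i,t}=\boldtheta_{i,t}+\bv_{i,t}$, using the noise entropy $h(\bv_{i,t})=\log\Delta_{i,t}$ to propagate a non-Gaussianness bound from the input to the output; this reduces the claim to the corresponding statement for the innovation, $h(\boldtheta_{i,t})\ge\tfrac12\log\!\big(12\,\mathrm{Var}(\boldtheta_{i,t})\big)$, namely that $\boldtheta_{i,t}$ has differential entropy at least that of a uniform variable of the same variance. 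This is the step I expect to be the main obstacle: it does \emph{not} follow from matching second moments alone (a nearly discrete input would violate it and would make $I(\boldtheta_{i,t};\boldeta_{i,t})$ far smaller than its Gaussian surrogate), so it must be secured from the structure of the Kalman innovation, which is a convolution of the continuous Gaussian driving noise with the past uniform quantization noises and is therefore smooth enough to meet the bound. I would make this rigorous following the argument of \cite{silva2009} (see also \cite{tanaka2016}, \cite{zamir1992}).
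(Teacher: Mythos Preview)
Your argument is correct and close in spirit to the paper's: both obtain the upper bound from the Gaussian maximum-entropy principle combined with the explicit gap $h(\bv_{i,t}^{\text{G}})-h(\bv_{i,t})=\tfrac12\log\tfrac{2\pi e}{12}$, and both defer the lower bound to \cite{silva2009}. The chief difference is in the decomposition. You expand $I(\boldtheta_{i,t};\boldeta_{i,t})=h(\boldeta_{i,t})-h(\bv_{i,t})$ (valid because $\bv_{i,t}\perp\boldtheta_{i,t}$) and apply max-entropy to the \emph{output} $\boldeta_{i,t}$; the paper instead writes $I(\boldtheta_{i,t};\boldeta_{i,t})=h(\boldtheta_{i,t})-h(\bv_{i,t})$ and applies max-entropy to the \emph{input}, a step that is justified there by asserting that $\bv_{i,t}$ and $\boldeta_{i,t}$ are independent---which is not what the dithered-quantization property stated below (\ref{eq:last22}) actually provides (only $\bv_{i,t}\perp\boldtheta_{i,t}$ is guaranteed, and for an additive channel the noise is generally \emph{not} independent of the output). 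Your route sidesteps that issue. Your relative-entropy identity $I-I^{\text{G}}=\tfrac12\log\tfrac{2\pi e}{12}-D(p_{\boldeta_{i,t}}\|\phi)$ also unifies the two inequalities neatly (upper bound: $D\ge 0$, strict because the sinc factor in the characteristic function of $\boldeta_{i,t}$ precludes Gaussianity; lower bound: $D\le\tfrac12\log\tfrac{2\pi e}{12}$). Finally, your EPI reduction of the lower bound to $h(\boldtheta_{i,t})\ge\tfrac12\log\!\big(12\,\mathrm{Var}(\boldtheta_{i,t})\big)$ is more explicit than the paper's bare citation and is exactly the right idea: the innovation is a linear combination of independent Gaussians and uniforms, and iterating the entropy power inequality with $e^{2h(U)}=12\,\mathrm{Var}(U)$ for uniform $U$ and $e^{2h(G)}=2\pi e\,\mathrm{Var}(G)>12\,\mathrm{Var}(G)$ for Gaussian $G$ yields precisely this bound.
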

\begin{proof}
 See Appendix B.
\end{proof}

Using Lemma~\ref{lem:silva}, we can generate a new bound from Lemma~\ref{lem:rate} via
\begin{equation}
\label{eq:rate_approx}
I(\boldtheta_{i,t}^\text{G}; \boldeta_{i,t}^\text{G}) \leq \mathbb{E}(\boldell_{i,t})<I(\boldtheta_{i,t}^\text{G}; \boldeta_{i,t}^\text{G})+\underbrace{1+\frac{1}{2}\log\frac{2\pi e}{12}}_{\approx 1.254 \text{[bits]}}.
\end{equation} We will use the expression for $I(\boldtheta_{i,t}^\text{G}; \boldeta_{i,t}^\text{G})$ to approximate the communication cost in (\ref{eq:firstcost}). In particular we define $R^a_{i}$ as the \textit{approximate} time-averaged expected bitrate for the $i^{\mathrm{th}}$ sensor via
\begin{align}\label{eq:approx}
    R^a_{i} =\frac{1}{T}\sum_{t=1}^{T}I(\boldtheta_{i,t}^\text{G}; \boldeta_{i,t}^\text{G})
\end{align} Since $\bv_{i,t}^\text{G}\sim\mathcal{N}(0,\Delta_{i,t}^2/12)$ , the mutual information $I(\boldtheta_{i,t}^\text{G}; \boldeta_{i,t}^\text{G})$ is a function of the quantizer sensitivity $\Delta_{i,t}$. Tuning the set of sensitives allows us to adjust the data rates allocated to different sensors over time. We demonstrate this explicitly in Section \ref{sec:doc}. In the remainder of this section, we will use some properties from Kalman filters to extend our notion of ``second-order equivalence'' between the random variables in Fig.~\ref{fig:channel2} and  Fig.~\ref{fig:channel3} to the case of multiple sensors, and we 
define the sensor rate allocation problem. 
\subsection{Linear minimum mean square error estimation}
We consider sensor fusion in the setting that where the fusion center's observations follow the models of either Fig.~\ref{fig:channel1} or Fig.~\ref{fig:channel3}. In both cases, the fusion center uses a Kalman filter to recursively estimate $\mathbf{x}_{t}$. When all sensors conform to the non-Gaussian model of Fig.~\ref{fig:channel1}, at time $t$, the Kalman filter computes
\begin{subequations}
\begin{align}
    \mathbf{\hat{x}}_{t|t-1} = \text{the LMMSE estimate of $\bx_{t}$ given $\{\boldeta_{i,k}\}_{i\in 1:M, k\in 1:t-1}$ } 
\end{align} and
\begin{align}
    \mathbf{\hat{x}}_{t|t} = \text{the LMMSE estimate of $\bx_{t}$ given $\{\boldeta_{i,k}\}_{i\in 1:M, k\in 1:t}$ }.
\end{align}
\end{subequations} Conversely, in the case where all sensors conform to the Gaussian model (cf. Fig.~\ref{fig:channel3}), the LMMSE estimate computed by the Kalman filter corresponds to the minimum mean square error (MMSE) estimate, e.g., 
\begin{subequations}
\begin{align}
    \hat{\bx}_{t|t-1}^\text{G}=\mathbb{E}(\bx_t|\boldeta_{1:t-1}^\text{G})
\end{align} and
\begin{align}
    \hat{\bx}_{t|t}^\text{G}=\mathbb{E}(\bx_t|\boldeta_{1:t}^\text{G}). 
\end{align}
\end{subequations} Analogously to the previous section, if the initial conditions in both the Gaussian and non-Gaussian systems have identical means and covariance matrices, the Kalman filter recursions, in particular the achieved mean squared errors, are identical. Thus, for any fixed choice of quantizer sensitives $\Delta_{i,t}$ the estimator in Fig.~\ref{fig:channel1}  (equivalently,  Fig.~\ref{fig:channel2}) achieves the same MSE performance as the estimator in  Fig.~\ref{fig:channel3}. 


Let $U$ be the diagonal matrix defined in (\ref{eq:umatdef}) and let $V_{t}$ be a diagonal matrix such with the $V_{i,t}$s as diagonal entries i.e.
\begin{align}\label{eq:vtdef}
V_{t}=\begin{bmatrix}
    \frac{\Delta_{1,t}^2}{12} & 0 & \dots & 0 \\
    0 & \frac{\Delta_{2,t}^2}{12} & \dots & 0 \\
    \vdots & \vdots & \ddots & \vdots \\
    0 & 0 & \dots & \frac{\Delta_{M,t}^2}{12} 
\end{bmatrix}.
\end{align}
The Kalman gain at time $t$ is given by the forward Riccati recursion via
\begin{equation}
\label{eq:kalmangain}
L_t=P_{t|t-1}C^\top(CP_{t|t-1}C^\top+U+V_{t})^{-1}.
\end{equation} Define $P_{t|t}\in\mathbb{S}_{++}^n$ and $P_{t+1|t}\in\mathbb{S}_{++}^n$ as the estimation error covariances $P_{t|t}:=\text{Cov}(\bx_t-\hat{\bx}_{t|t}^\text{G})$ and
$P_{t+1|t}:=\text{Cov}(\bx_{t+1}-\hat{\bx}_{t+1|t}^\text{G})
$. 
These matrices satisfy the recursion
\begin{subequations}
\label{eq:cov}
\begin{align}
P_{t|t}^{-1}&=P_{t|t-1}^{-1}+C^\top (U+V_t)^{-1} C \\
P_{t+1|t}&=AP_{t|t}A^\top+FF^\top.
\end{align}
\end{subequations} We reiterate that due to the ``second-order equivalence'' between the Gaussian and non-Gaussian systems, we have for all $t$ that $\text{Cov}(\bx_t-\hat{\bx}_{t|t}^\text{G})=P_{t|t}$ and $\text{Cov}(\bx_{t+1}-\hat{\bx}_{t+1|t}^\text{G})=P_{t+1|t}$.

\subsection{The Sensor Rate Allocation (SRA) Problem}\label{subsec:sradef} 
We now propose the optimization problem to be studied in the remainder of this paper. We propose to minimize a weighted average of the approximate data rates $R^a_i$ assigned to each sensor $i=1, 2, ... , M$ subject to the constraint that the MSE performance of the fusion center's estimator does not exceed $\beta$. Define a set of positive weights  $\alpha_{i}$, $i\in1:M$ to represent the cost of transmitting a single bit from platform $i$ to the fusion center. The total communication cost is $\sum_{i=1}^M \alpha_iR_{i}$, where $R_{i}$ is defined in (\ref{eq:firstcost}). The approximate cost,  defined in terms of the approximate bitrates (cf. (\ref{eq:approx})), is defined as $\sum_{i=1}^M \alpha_iR^a_{i}$.  Our decision variables are the sensor platforms' quantizer sensitivities, e.g. $\{\Delta_{i,t}\}_{i\in 1:M, t\in1:T}$. Via (\ref{eq:vtdef}), it can be seen that for any time $t$ the set of feasible quantizer sensitivities are in one-to-one correspondence with the set of diagonal, positive semidefinite matrices. Let the set of positive semidefinite diagonal $M\times M$ matrices be denoted $\mathbb{D}^M_+$.
For $\beta$ the constraint on the fusion center's MSE, we define the sensor rate allocation problem as 
\begin{subequations}
\label{eq:s3ra}
\begin{align}
\underset{\substack{V_{t}\in \mathbb{D}^M_+\\t \in 1:T}}{\min}\quad & \frac{1}{T}\sum_{t=1}^T\sum_{i=1}^M \alpha_i I(\boldtheta_{i,t}^{\text{G}};\boldeta_{i,t}^{\text{G}})  \label{eq:s3ra_a} \\ 
\text{s.t.}\quad &\quad  \frac{1}{T} \sum_{t=1}^T \mathbb{E} \|\bx_t-\hat{\bx}_{t|t}\|^2 \leq \beta. \label{eq:s3ra_b}
\end{align}
\end{subequations} 
Say $\{V_t^*\}_{t=1}^T$ is a feasible solution of \eqref{eq:s3ra} and $f^*$ is the attained value. Choosing the quantization sensitivities such that $\frac{\Delta_{i,t}^2}{12}=V_{i,t}^*$ guarantees a communication cost satisfying 
\begin{align}
    \sum_{i=1}^M \alpha_iR_{i} \le f^*+(1+\frac{1}{2}\log\frac{2\pi e}{12})\times\sum_{i=1}^M \alpha_i.
\end{align}

We also consider the infinite horizon problem where the quantizer sensitivities are time invariant, namely 
\begin{subequations}
\label{eq:s3ra_inf}
\begin{align}
\underset{\substack{V\in \mathbb{D}^M_+\\t \in 1:T}}{\min}\quad & \limsup_{T\rightarrow \infty}\frac{1}{T}\sum_{t=1}^T\sum_{i=1}^M \alpha_i I(\boldtheta_{i,t}^{\text{G}};\boldeta_{i,t}^{\text{G}}) \label{eq:s3ra_inf_a}\\ 
\text{s.t.}\quad &\quad  \limsup_{T\rightarrow \infty}\frac{1}{T} \sum_{t=1}^T \mathbb{E} \|\bx_t-\hat{\bx}_{t|t}\|^2 \leq \beta. \label{eq:s3ra_inf_b}
\end{align}
\end{subequations} Essentially, the optimal solution to the problem (\ref{eq:s3ra_inf}) can be interpreted as the best time-invariant solution to the infinite-horizon SRA problem. The analysis of time-varying solutions to the SRA problem presents an opportunity for future work (cf. Section \ref{sec:conclusion}). In the following subsection, we describe an application of the sensor rate allocation optimization to resource management in remote sensing scenario.

\subsection{Application example: physical layer resource allocation}\label{ssec:phy}
The minimizations in (\ref{eq:s3ra}) and (\ref{eq:s3ra_inf}) can be applied to a variety of resource allocation problems in wireless (and, in principle, wireline) communication systems. By choosing the weights $\alpha_i$ in (\ref{eq:s3ra_inf}) appropriately, we can design a data compression scheme that seeks to minimize the number of physical layer resources required to achieve the required estimator performance at the fusion center. As an example, we consider a setup similar to the one proposed in \citen{rebal}, where several remote sensing platforms communicate their observations to a data fusion center over a shared wireless medium. We present a simple example to conclude this section. 

\begin{figure}
    \centering
    \includegraphics[scale=0.19]{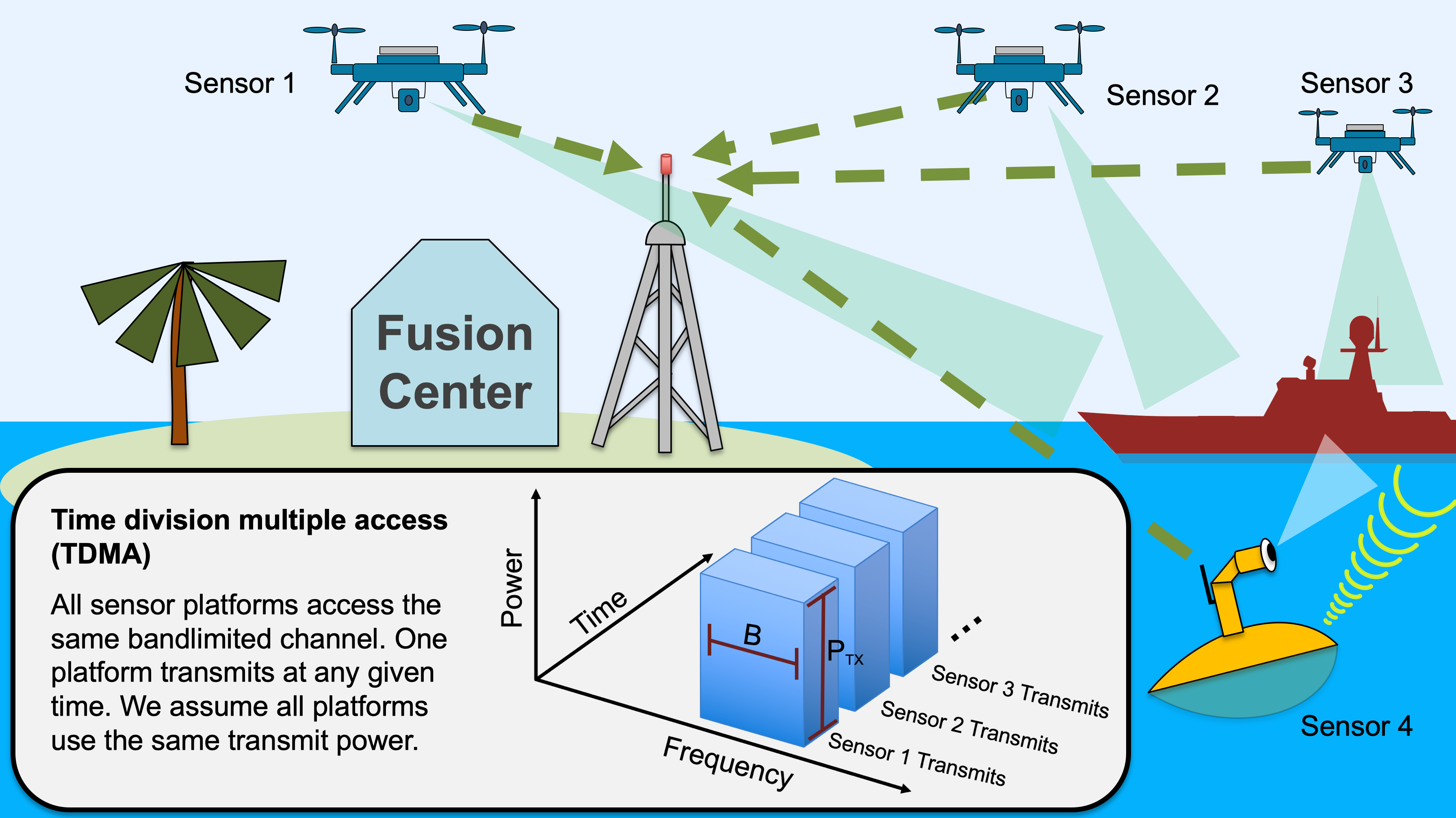}
    \caption{In this example, the different sensor platforms relay their measurements to the fusion center over a common wireless link using TDMA (see inset).}
    \label{fig:tdma_phy}
\end{figure}
Consider the case where several remote sensors, using a fixed uplink transmit power $P_{\mathrm{UL}}$ (in Watts), communicate their measurements to the fusion center via  time-division multiple access (TDMA) over a channel of passband bandwidth $B$ (in Hz). In such a system, the platforms take turns transmitting their measurements over the same bandlimited channel---see Figure \ref{fig:tdma_phy}. We assume that the $i^{\mathrm{th}}$ sensor platform is at a range of $r_i$ from the fusion center, that all sensor platforms have identical transmission systems, and that the channel from each sensor to the fusion center is line-of-sight. The amount of power received from transmitter $i$ depends on range, as well as a variety of factors, like the gains from the antenna systems and the losses from other components. We let $L$ encapsulate the factors that do not depend on the range---in the present case $L$ is assumed the same for all transmitters. The additive noise at the receiver is assumed to be white and Gaussian with power spectral density $N_{0}$. Since only one sensor platform transmits at any given time,  the effective channel between the $i^{\mathrm{th}}$ transmitter and the fusion center is a point-to-point additive white Gaussian noise (AWGN) channel with a signal-to-noise ratio (SNR) of
\begin{align}
    \mathrm{SNR}_{i} = L\frac{P_{\mathrm{UL}}}{r^2_{i}BN_{0}}.
\end{align}  The \textit{capacity} of the wireless channel from receiver $i$ to the fusion center is
\begin{align}
    C_{i} = B\log_2(1+\mathrm{SNR}_{i})\text{ bits/second},
\end{align} which is the maximum reliable (e.g. error-free) data-rate that can be sustained from the $i^{\mathrm{th}}$ transmitter to the receiver. The \textit{airtime}, $\tau_i$, in seconds, required to send $b_{i}$ bits from the $i^{\mathrm{th}}$ sensor platform to the fusion center is lower bounded via
\begin{align}
    \tau_i \ge \frac{b_{i}}{C_{i}}.
\end{align} This bound is in general loose, as the zero-error channel capacity is achievable only asymptotically (c.f. \citen{polyanskiy}). However, with modern error-correcting codes (including low-density parity-check and turbo codes) it is possible to communicate with very small error probability near the capacity for the point-to-point AWGN channel\citen{procCap}. We thus make the assumption that
\begin{align}
    \tau_i\approx \frac{b_{i}}{C_{i}}. 
\end{align} In particular, the airtime is directly proportional to the energy expended on communication. Under the assumption that all the sensor platforms use the same transmit power, the energy expended to transmit $b_{i}$ bits is $E_{i} = P_{\mathrm{UL}}\tau_i$.

If the system uses time-division duplexing in addition to TDMA, the downlink communication also time-shares the same frequency channel shared between the uplink users. Let $N_{i}$ denote the noise power spectral density of the channel from the fusion center to the $i^\mathrm{th}$ user, and let $\theta^2_{\max} = \max_{i}N_{i}r_i^2$. 
Assuming line-of-sight communication, and that the fusion center feeds back a message containing $b_{\mathrm{DL}}$ bits to all of the remote platforms, the airtime used by the downlink can be approximated by 
\begin{align}
    \tau_\mathrm{DL} \approx \frac{b_{\mathrm{DL}}}{B\log_2(1+LP_{\mathrm{DL}}/(B\theta^2_{\max}))}.
\end{align} Note that for a fixed downlink message size $b_{\mathrm{DL}}$, $\tau_\mathrm{DL}$ decreases to $0$ as $P_{\mathrm{DL}}$ increases. Thus, we claim that in a setting where the uplink is subject to a strict power constraint (as appropriate, for example, if the users are battery-powered) and the downlink power constraint, $P_{\mathrm{DL}}$, is sufficiently high it is reasonable to assume that the fusion center can feed back a finely quantized version $\hat{\mathbf{x}}_{t|t-1}$ to the users using a negligible airtime.

Minimizing the uplink airtime of a sensing task has a great deal of utility. It minimizes the amount of energy the network spends on uplink communication. Furthermore, completing a sensing task with the minimum airtime is useful in and of itself, as it may allow other users and systems the opportunity to access the channel without causing (or experiencing) additional interference. If we make the choice
\begin{align}
    \alpha_{i} = \frac{1}{C_{i}},
\end{align} the optimizations in (\ref{eq:s3ra}) can be seen as minimizing the expected airtime (and, consequently, the energy expended on communication) required to complete the sensing task in the finite horizon.  Note that the expected airtime expended by each sensor can vary over sampling periods, since at sampling period $t$ sensor $i$ will send an average of about $\lceil I(\bx_t;\boldeta_{i,t}^{\text{G}}|\boldeta_{1:t-1}^{\text{G}})+1.254 \rceil$ bits, where the mutual information is computed under the probability measure imposed by the minimizing policy. 

It should be noted that, due to our use of variable-length coding, the required airtime depends on the length of the codeword and is thus a random variable. In delay-tolerant applications, this may not be much of a problem. However, assume the sample rate of the sensors is fixed at $T_{s}$. The network will become congested if, during any sampling period, the required airtime exceeds $T_{s}$. In this setting, new measurements are being produced at a rate faster than the communication delay---more complicated scheduling algorithms may be required to ensure that the fusion center estimator achieves the desired performance by any real-time deadline (see, for example, \citen{rebal}). If the expected airtime, per sampling period, obtained from the optimization (\ref{eq:s3ra}) is small compared to $T_{s}$, the Markov inequality can be used to derive a simple bound on the congestion probability. These ideas can be extended to more general types of communication channels (e.g. fading channels) and corresponding more general notions of capacity.

\section{Conversion to difference-of-convex problem}\label{sec:doc}
In this section, we reformulate  (\ref{eq:s3ra}) and (\ref{eq:s3ra_inf}) as difference of convex (DC) optimization problems. 


\subsection{Reformulation of mutual information}

The mutual information \eqref{eq:s3ra_a} can be expressed as:
\begin{equation}
\label{eq:dir_info}
    \begin{split}
        I(\boldtheta_{i,t}^\text{G}; \boldeta_{i,t}^\text{G}) &=h(\boldeta_{i,t}^\text{G})-h(\boldeta_{i,t}^\text{G}|\boldtheta_{i,t}^\text{G})\\
     &= h(C_i(\bx_t-\hat{\bx}_{t|t-1}^\text{G})+\bu_{i,t}+\bv^{\text{G}}_{i,t})-h(\boldtheta_{i,t}^\text{G}+\bv^{\text{G}}_{i,t}|\boldtheta_{i,t}^\text{G}) 
    \\
    &= \tfrac{1}{2}\log(C_i P_{t|t-1}C_i^\top+U_i+V_{i,t})-  \tfrac{1}{2}\log(V_{i,t}).
    \end{split}
\end{equation}
Introducing the new variables $Q_{t|t-1}:=P_{t|t-1}^{-1}$, $Q_{t|t}:=P_{t|t}^{-1}$ and 
$\delta_{i,t}:= (U_i+V_{i,t})^{-1}$, we can rewrite the expression derived for $I(\boldtheta_{i,t}^\text{G}; \boldeta_{i,t}^\text{G})$ in \eqref{eq:dir_info} as

\begin{IEEEeqnarray}{rCl}
I(\boldtheta_{i,t}^\text{G}; \boldeta_{i,t}^\text{G}) &=&-\tfrac{1}{2}\log(\delta_{i,t}^{-1}-U_{i})-\tfrac{1}{2} \log{( \delta_{i,t}^{-1}+C_i Q_{t|t-1}^{-1}C_i^\top)^{-1}} \nonumber \\
&=&
\begin{cases}\min_{ \psi_{i,t}, \; \gamma_{i,t}} & \tfrac{1}{2} \log\psi_{i,t}-\tfrac{1}{2} \log \gamma_{i,t}  \\
\text{s.t.} & \psi_{i,t} \geq (\delta_{i,t}^{-1}-U_i)^{-1},\\
&\gamma_{i,t} \leq (\delta_{i,t}^{-1}+C_iQ_{t|t-1}^{-1}C_i^\top)^{-1}
\end{cases} \label{eq:nminus} \\
& {=}&
\begin{cases}\min_{\psi_{i,t}, \gamma_{i,t}} & \quad \tfrac{1}{2} \log\psi_{i,t}-\tfrac{1}{2} \log \gamma_{i,t} \\
\text{s.t.}
& 
\begin{bmatrix}
\delta_{i,t} & \delta_{i,t} U_i & \delta_{i,t}\\
U_i\delta_{i,t} & U_i & 0\\
\delta_{i,t}&0 &  \psi_{i,t} 
\end{bmatrix}\succeq 0,\\
&
\begin{bmatrix}
\delta_{i,t}-\gamma_{i,t} & \delta_{i,t}C_i \\
C_i^\top \delta_{i,t} & Q_{t|t-1}+C_i^\top \delta_{i,t} C_i
\end{bmatrix} \succeq 0.
\end{cases}
\label{eq:mi_lmi_a}
\end{IEEEeqnarray}
The slack variables $\psi_{i,t}$ and $\gamma_{i,t}$ were introduced in (\ref{eq:nminus}). To obtain the first inequality constraint in (\ref{eq:mi_lmi_a}), the Schur complement lemma is applied to the first constraint in (\ref{eq:nminus}). Likewise, the second inequality constraint in (\ref{eq:mi_lmi_a}) follows from expanding the second inequality in  (\ref{eq:nminus}) before applying the Shur complement lemma. 

\subsection{Reformulation of mean square error}
Using the variable $Q_{t|t}$, the fusion center's estimator MSE \eqref{eq:s3ra_b} can be written as

\begin{equation}
\label{eq:mse_lmi}
    \mathbb{E}\|\bx_t-\hat{\bx}_{t|t}^{\text{G}}\|^2=\text{Tr}(P_{t|t})=\text{Tr}(Q_{t|t}^{-1}) = \begin{cases}\min_{S_t} & \text{Tr}(S_t)  \\
     \text{s.t.} & Q_{t|t}^{-1}\preceq S_t 
\end{cases}
=\begin{cases}\min_{S_t} & \text{Tr}(S_t) \\
  \text{s.t.} & \begin{bmatrix}
    S_t & I \\ I & Q_{t|t}
    \end{bmatrix}\succeq 0
    \end{cases}.
\end{equation}
The final step of (\ref{eq:mse_lmi}) follows from Schur complement formula. 

\subsection{Reformulation of the original problem (\ref{eq:s3ra})}
Using \eqref{eq:cov}, \eqref{eq:mi_lmi_a}, and \eqref{eq:mse_lmi}, problem \eqref{eq:s3ra} can be reformulated as:
\begin{subequations}
\label{eq:prob1}
\begin{align}
    \min \quad & \frac{1}{T}\sum_{t=1}^T \sum_{i=1}^M \frac{\alpha_i}{2}(\log \psi_{i,t}-\log\gamma_{i,t}) \label{eq:prob1_a}\\
    \label{eq:prob1_g}
    \text{s.t.} \quad &
    \begin{bmatrix}
     \delta_{i,t} & \delta_{i,t} U_i & \delta_{i,t}\\
U_i\delta_{i,t} & U_i & 0\\
\delta_{i,t}&0 &  \psi_{i,t} 
\end{bmatrix}\succeq 0,\\
&    
\begin{bmatrix}
    \delta_{i,t}-\gamma_{i,t} & \delta_{i,t}C_i \\
C_i^\top \delta_{i,t} & Q_{t|t-1}+C_i^\top \delta_{i,t} C_i
\end{bmatrix} \succeq 0,\label{eq:prob1_b} \\
&\begin{bmatrix}
    S_t & I \\ I & Q_{t|t}
    \end{bmatrix}\succeq 0, \;\; \frac{1}{T}\sum\nolimits_{t=1}^T \text{Tr}(S_t)\leq \beta, \label{eq:prob1_d}\\
&Q_{t|t}=Q_{t|t-1}+\sum\nolimits_{i=1}^M \delta_{i,t}C_i^\top C_i, \label{eq:prob1_e} \\
&Q_{t|t-1}^{-1}=AQ_{t-1|t-1}^{-1}A^\top+FF^\top, \label{eq:prob1_f}
\end{align}
\end{subequations}
where  the decision variables are $[\psi_{i,t},\delta_{i,t}, \gamma_{i,t}]$, for $i=1, ... , M$ and $t=1, ... , T$,  $S_t$ for $t=1, ... , T$, and $[Q_{t|t}, Q_{t|t-1}]$ for $t=2, ... , T$. The constraints \eqref{eq:prob1_e} and \eqref{eq:prob1_f} are imposed for $t=2, ... , T$, starting from $Q_{1|1}=P_{1|1}^{-1}$. All constraints in problem \eqref{eq:prob1} are convex except \eqref{eq:prob1_f}.
In the following proposition, we show the non-convex constraint \eqref{eq:prob1_f} can be replaced by a convex constraint without changing the nature of the problem. More precisely, we show that the equality in \eqref{eq:prob1_f}  can be replaced by an inequality and 
we can solve \eqref{eq:new_prob1}, instead of \eqref{eq:prob1}, to obtain the optimal rate allocation.
\begin{subequations}
\label{eq:new_prob1}
\begin{align}
    \min \quad & \frac{1}{T}\sum_{t=1}^T \sum_{i=1}^M \frac{\alpha_i}{2}(\log \psi_{i,t}-\log\gamma_{i,t}) \label{eq:new_prob1_a} \\ \label{eq:new_prob1_g}
    \text{s.t.} \quad &
\begin{bmatrix}
\delta_{i,t} & \delta_{i,t} U_i & \delta_{i,t}\\
U_i\delta_{i,t} & U_i & 0\\
\delta_{i,t}&0 &  \psi_{i,t} 
\end{bmatrix}\succeq 0,    
\\
&    
\begin{bmatrix}
    \delta_{i,t}-\gamma_{i,t} & \delta_{i,t}C_i \\
C_i^\top \delta_{i,t} & Q_{t|t-1}+C_i^\top \delta_{i,t} C_i
\end{bmatrix} \succeq 0, \label{eq:new_prob1_b} \\
&\begin{bmatrix}
    S_t & I \\ I & Q_{t|t}
\end{bmatrix}\succeq 0, \;\; \frac{1}{T}\sum\nolimits_{t=1}^T \text{Tr}(S_t)\leq \beta, \label{eq:new_prob1_d} \\
&Q_{t|t}=Q_{t|t-1}+\sum\nolimits_{i=1}^M \delta_{i,t}C_i^\top C_i, \label{eq:new_prob1_e} \\
&Q_{t|t-1}^{-1}\succeq AQ_{t-1|t-1}^{-1}A^\top+FF^\top \label{eq:new_prob1_f}
\end{align}
\end{subequations}
\begin{proposition}
\label{lem:ineq}
Let $[ \psi_{i,t}^{*}, \delta_{i,t}^{*}, \gamma_{i,t}^{*}, S_t^{*}, Q_{t|t}^{*}, Q_{t|t-1}^{*}]$ be the optimal solution for \eqref{eq:new_prob1}. Then, $[\psi_{i,t}^{*}, \delta_{i,t}^{*}, \gamma_{i,t}^{*}, S_t^{*}, Q_{t|t}^{**}, Q_{t|t-1}^{**}]$ is the optimal solution for \eqref{eq:prob1}, where $Q_{t|t}^{**}$ and $Q_{t|t-1}^{**}$ are calculated recursively by 
\begin{subequations}
\label{eq:definition}
\begin{align}
    &Q_{t|t-1}^{**-1} = AQ_{t-1|t-1}^{**-1}A^\top+FF^\top, \label{eq:definition_a}\\
    &Q_{t|t}^{**}=Q_{t|t-1}^{**}+\sum\nolimits_{i=1}^M \delta_{i,t}^{*}C_i^\top C_i, \label{eq:definition_b}
\end{align}
\end{subequations}
starting from $Q_{1|1}^{**} = Q_{1|1}^*$. Moreover, if we denote the optimal values of \eqref{eq:prob1} and \eqref{eq:new_prob1} by $J_1^*$ and $J_2^*$ respectively, we have $J_1^*=J_2^*$. 

\end{proposition}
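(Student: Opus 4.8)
The plan is to exploit the fact that \eqref{eq:new_prob1} is nothing but a relaxation of \eqref{eq:prob1}: it is obtained by weakening the single equality \eqref{eq:prob1_f} to the inequality \eqref{eq:new_prob1_f}, while the objective and every other constraint are left untouched. Two structural observations drive the whole argument. First, the objective \eqref{eq:new_prob1_a} is identical to \eqref{eq:prob1_a} and depends only on $\{\delta_{i,t},\gamma_{i,t}\}$, not on the information matrices $Q$; hence the claim reduces to a feasibility-preservation statement about the $Q$ variables. Second, any feasible point of \eqref{eq:prob1} is automatically feasible for \eqref{eq:new_prob1} (equality implies $\succeq$), which immediately gives $J_2^*\le J_1^*$. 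All the real work lies in the reverse inequality.

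First I would take an optimal point $[\delta_{i,t}^{*},\gamma_{i,t}^{*},S_t^{*},Q_{t|t}^{*},Q_{t|t-1}^{*}]$ of \eqref{eq:new_prob1} and define $Q^{**}$ via the \emph{equality} recursion \eqref{eq:definition}, keeping $\delta^{*},\gamma^{*},S^{*}$ fixed (the information matrices are positive definite, so the inverses in \eqref{eq:definition} are well defined). The crux is to prove by induction on $t$ that
\[
Q_{t|t-1}^{**}\succeq Q_{t|t-1}^{*}\quad\text{and}\quad Q_{t|t}^{**}\succeq Q_{t|t}^{*}.
\]
The base case is immediate since $Q_{1|1}^{**}=Q_{1|1}^{*}=P_{1|1}^{-1}$. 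For the inductive step, the hypothesis $Q_{t-1|t-1}^{**}\succeq Q_{t-1|t-1}^{*}$ and the fact that inversion is order-reversing on the positive definite cone give $Q_{t-1|t-1}^{**-1}\preceq Q_{t-1|t-1}^{*-1}$; combining the equality \eqref{eq:definition_a} with the relaxed inequality \eqref{eq:new_prob1_f} then yields
\[
Q_{t|t-1}^{**-1}=AQ_{t-1|t-1}^{**-1}A^\top+FF^\top \preceq AQ_{t-1|t-1}^{*-1}A^\top+FF^\top \preceq Q_{t|t-1}^{*-1},
\]
so that $Q_{t|t-1}^{**}\succeq Q_{t|t-1}^{*}$. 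Adding the common positive semidefinite term $\sum_i\delta_{i,t}^{*}C_i^\top C_i$ via \eqref{eq:definition_b} and \eqref{eq:new_prob1_e} propagates the order to $Q_{t|t}^{**}\succeq Q_{t|t}^{*}$, closing the induction.

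With this Loewner comparison in hand, feasibility of $[\delta^{*},\gamma^{*},S^{*},Q^{**}]$ for \eqref{eq:prob1} is routine. Constraints \eqref{eq:prob1_e} and \eqref{eq:prob1_f} hold by the very definition \eqref{eq:definition}. For the linear matrix inequalities \eqref{eq:prob1_b} and \eqref{eq:prob1_d}, replacing $Q^{*}$ by the larger $Q^{**}$ only adds the block-diagonal matrix whose sole nonzero block is $Q^{**}-Q^{*}\succeq 0$ to the respective block matrices, which are already $\succeq 0$ by relaxed feasibility; a sum of positive semidefinite matrices is positive semidefinite, so both LMIs persist. The trace constraint in \eqref{eq:prob1_d} is untouched because $S_t^{*}$ is unchanged. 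Hence $[\delta^{*},\gamma^{*},S^{*},Q^{**}]$ is feasible for \eqref{eq:prob1}, and since the objective sees only $\delta^{*},\gamma^{*}$ it attains the value $J_2^{*}$; therefore $J_1^{*}\le J_2^{*}$. Combined with $J_2^{*}\le J_1^{*}$ this gives $J_1^{*}=J_2^{*}$, and the constructed point, being feasible for \eqref{eq:prob1} and attaining the optimal value, is optimal.

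I expect the monotonicity induction to be the main obstacle: everything hinges on the order-reversing property of inversion interacting correctly with the prediction recursion and the relaxed inequality, and on the fact that enlarging $Q$ in the Loewner order \emph{strengthens} rather than violates both the rate LMI and the MSE LMI. Once that comparison is secured, the equality of optimal values and the optimality of the reconstructed point follow with no further analytic effort.
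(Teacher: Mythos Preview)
Your proposal is correct and follows essentially the same approach as the paper's proof: an induction establishing the Loewner ordering $Q^{**}\succeq Q^{*}$ via the order-reversing property of inversion combined with the relaxed inequality \eqref{eq:new_prob1_f}, followed by the observation that enlarging $Q$ preserves the LMIs \eqref{eq:prob1_b} and \eqref{eq:prob1_d}, yielding feasibility and hence $J_1^{*}\le J_2^{*}$, while the reverse inequality is immediate from the relaxation. The only cosmetic difference is that the paper indexes the induction by the pair $(Q_{t|t},Q_{t+1|t})$ whereas you index by $(Q_{t|t-1},Q_{t|t})$, but the logic is identical.
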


\begin{proof}
See Appendix \ref{app4}.
\end{proof}

Constraint \eqref{eq:new_prob1_f} can be written as an equivalent linear matrix inequality (LMI) \citen{boyd1994}  condition.
Therefore, problem \eqref{eq:s3ra} can be written as:
\begin{subequations}
\label{eq:prob2}
\vspace{-3ex}
\begin{align}
    \min \quad & \frac{1}{T}\sum_{t=1}^T \sum_{i=1}^M \frac{\alpha_i}{2}(\log \psi_{i,t}-\log\gamma_{i,t}) \label{eq:prob2_a}\\ \label{eq:prob2_g}
    \text{s.t.} \quad &
    \begin{bmatrix}
\delta_{i,t} & \delta_{i,t} U_i & \delta_{i,t}\\
U_i\delta_{i,t} & U_i & 0\\
\delta_{i,t}&0 &  \psi_{i,t} 
\end{bmatrix}\succeq 0,
\\
&
\begin{bmatrix}
    \delta_{i,t}-\gamma_{i,t} & \delta_{i,t}C_i \\
C_i^\top \delta_{i,t} & Q_{t|t-1}+C_i^\top \delta_{i,t} C_i
\end{bmatrix} \succeq 0, \label{eq:prob2_b} \\
& \begin{bmatrix}
    S_t & I \\ I & Q_{t|t}
    \end{bmatrix}\succeq 0,\;\; \frac{1}{T}\sum\nolimits_{t=1}^T \text{Tr}(S_t)\leq \beta, \label{eq:prob2_d}\\
&Q_{t|t}=Q_{t|t-1}+\sum\nolimits_{i=1}^M \delta_{i,t}C_i^\top C_i. \label{eq:prob2_e} \\
&\begin{bmatrix}
Q_{t|t-1} & Q_{t|t-1}A & Q_{t|t-1}F \\
A^\top Q_{t|t-1} & Q_{t-1|t-1} & 0 \\
F^\top Q_{t|t-1} & 0 & I
\end{bmatrix}\succeq 0, \label{eq:prob2_f}.
\end{align}
\end{subequations}


It is simple to verify that \eqref{eq:prob2} is the minimization of the difference of two convex functions subject to convex constraints 
and thus it is an instance of a difference-of-convex program. The infinite horizon (time-invariant) counterpart of \eqref{eq:prob2} can be formulated as:
\begin{subequations}
\label{eq:prob3}
\begin{align}
    \min \quad &  \sum_{i=1}^M \frac{\alpha_i}{2}(\log \psi_{i}-\log\gamma_{i}) \\ \label{eq:prob3_e}
    \text{s.t.} \quad &
    \begin{bmatrix}
\delta_{i} & \delta_{i} U_i & \delta_{i}\\
U_i\delta_{i} & U_i & 0\\
\delta_{i}&0 &  \psi_{i} 
\end{bmatrix}\succeq 0, \; \forall i= 1, ... , M,
\\
&    
\begin{bmatrix}
    \delta_{i}-\gamma_{i} & \delta_{i}C_i \\
C_i^\top \delta_{i} & \hat{Q}+C_i^\top \delta_{i} C_i
\end{bmatrix} \succeq 0, \; \forall i= 1, ... , M, \label{eq:prob3_b} \\
& Q=\hat{Q}+\sum\nolimits_{i=1}^M \delta_{i}C_i^\top C_i, \;\; \text{Tr}(S)\leq \beta, \label{eq:prob3_c}\\
&  \begin{bmatrix}
    S & I \\ I & Q
    \end{bmatrix}\succeq 0, \;\; \begin{bmatrix}
\hat{Q} & \hat{Q}A & \hat{Q}F \\
A^\top \hat{Q} & Q & 0 \\
F^\top \hat{Q} & 0 & I
\end{bmatrix}\succeq 0, \label{eq:prob3_d}.
\end{align}
\end{subequations}

\section{Convex-concave procedure (CCP)}\label{sec:ccp}


In this section, we develop an iterative algorithm based on the convex-concave procedure (CCP) \citen{lipp2016} to find local minima of the time-varying and time-invariant sensor data-rate allocation problems (\ref{eq:prob2}) and (\ref{eq:prob3}). CCP starts by  over-approximating the non-convex terms of the DC program via linearization around a nominal point. The resulting convex problem can then be solved efficiently, and the algorithm iterates by convexifying the problem around the obtained solution. The process terminates when a local minimum is found.  Starting from a feasible nominal point, as it is shown in \citen{lipp2016}, all subsequent iterations will be feasible and the algorithm converges to a local minima.

We now show how the CCP algorithm operates for the time-invariant problem \eqref{eq:prob3}. 
For this problem, we linearize the concave term $\log\psi_i$ around the nominal point $\hat{\psi}_i$
which gives an upper bound as $\log\psi_i\leq \frac{1}{\hat{\psi}_i}(\psi_i-\hat{\psi}_i)+\log\hat{\psi}_i$. 
Thus, at each iteration of CCP algorithm we solve:
\begin{subequations}
\label{eq:prob4}
\begin{align}
    \min \quad &  \sum_{i=1}^M \frac{\alpha_i}{2}(\psi_i/\hat{\psi}_i-1+\log\hat{\psi}_i-\log\gamma_{i}) \\
    \text{s.t.} \quad & (\ref{eq:prob3_e})-(\ref{eq:prob3_d}). 
\end{align}
\end{subequations}

If we denote the solution of (\ref{eq:prob4}) at iteration $k$ by $(\psi_i^k,\delta_i^k,\gamma_{i}^k, S^k, Q^k, \hat{Q}^k)$, at iteration $k+1$ (\ref{eq:prob4}) is solved for $\hat{\psi}_i=\psi_i^k$. Note that the optimal value of \eqref{eq:prob4} is an upper bound for the optimal value of \eqref{eq:prob3} with same decision variables $S, Q, \hat{Q}, \psi_i, \delta_i,$ and $\gamma_i$ for $i=1,...M$. The CCP algorithm we use to perform the time-invariant optimization \eqref{eq:prob3} is outlined in Algorithm~\ref{alg1}. A similar algorithm can be used to solve (\ref{eq:prob2}). Problem \eqref{eq:prob4} is a log-det minimization and can be solved via  SDP solvers in polynominal time \citen{ben2001lectures}. As it will be demonstrated in Section~\ref{sec:simulations}, in our simulations Algorithm~\ref{alg1} tends to converge in a few tens of CCP iterations.

\begin{algorithm}\label{alg:ccp}
\caption{Convex-Concave Procedure (CCP)}\label{alg1}
\begin{algorithmic}
\State Set $tolerance$ sufficiently small;
\State Set initial value $\hat{\psi}_i \leftarrow 1$ for $i=1, 2, \dots, M$;
\For{$k=1, 2, \dots$}
\State solve (\ref{eq:prob4}).
\State $(\psi_i^k,\delta_i^k,\gamma_{i}^k, S^k, Q^k, \hat{Q}^k)\leftarrow$ Obtain optimal solution;
\State $f^{k}  \leftarrow$ Optimal value of \eqref{eq:prob4};
\State Update $\hat{\psi}_i\leftarrow \psi_i^k$ for $i=1:M$;
\If{$f^{k-1}-f^k \leq tolerance$}
\State break;
\EndIf
\EndFor
\end{algorithmic}
\end{algorithm}

\section{Numerical Analysis}\label{sec:simulations} 

\label{sec:Num}

In this section, we present two numerical simulations showing the effectiveness of Algorithm~\ref{alg1}. In both studies, we optimize the sum bitrate by setting $\alpha_{i}$ = 1 for all sensors.

\subsection{Heat diffusion system}
We first consider the problem of estimating the temperature distribution over a time-invariant 1D heat diffusion system \citen{chahlaoui2002}.
We consider a slender concrete rod that is $15$ m long. The length of the rod is equally divided by 60 nodes. The continuous-time heat diffusion system matrix $A$ is given by 
\begin{align*}
A_c = \frac{a}{h^2} \begin{bmatrix}
2 & -1 && \\
-1 & 2 & -1 &\\
&& \ddots & \ddots & \ddots &\\
&&& -1 & 2 & -1\\
&&&& -1 & 2
\end{bmatrix}
\end{align*}
where $a=7.5\times 10^{-7} \text{m}^2/\text{s}$ is the thermal diffusivity and $h = 0.2459$ m is the length of each segment. We assume the system is driven by a random heat input at each node. Let $E_{d} = I-\frac{\Delta_{t}}{2}A_c$ and $A_{d} = I+\frac{\Delta_{t}}{2}A_c$. We consider tracking the temperature of the rod at each node. A discrete-time model for this system of the form (\ref{eq:source}) is found by setting
\begin{subequations}
\begin{align}
    A = E_{d}^{-1}A_{d}\text{, and}
\end{align} 
\begin{align}
    F = I.
\end{align} 
\end{subequations} In these simulations, we set $\Delta_t =1$. Our set of sensor platforms is a thermometer placed at each node; we assume that the thermometers can measure the node's temperature noiselessly. Let $e_i$ for $i\in 1:60$ denote unit vectors from the standard basis. Formally, the measurement matrix for the $i^{\text{th}}$ sensor is 
\begin{align}
    C_i = e_ie_i^{\mathrm{T}}. 
\end{align} The measurement data is transmitted to a data fusion center over a communication network as described in Section \ref{sec:formulation}.  

Fig.~\ref{fig:CCP-ADMM01} shows the performance of Algorithm~\ref{alg1} as a function of the number of CCP iterations assuming that the constraint on the fusion center's MSE is $\beta$ = $0.1$. The algorithm is initialized with an equal data rate (quantizer sensitivity) assigned to each sensor. Fig.~\ref{fig:CCP-ADMM01}~(a) plots the sum data rate obtained by the solution obtained after a given number of CCP iterations, while Fig~\ref{fig:CCP-ADMM01}~(b) shows the data rates assigned to each sensor after each such iteration. In Fig~\ref{fig:CCP-ADMM01}~(b) the assigned data rate is presented as color-coded block at each iteration. Figs.~\ref{fig:CCP-ADMM01}~(a) and (b) demonstrate that both the total data rate for the network, and the data rates assigned to each sensor platform, converge after a sufficient number of CCP iterations. 


\begin{figure}
    \centering
    \includegraphics[width=0.9\columnwidth]{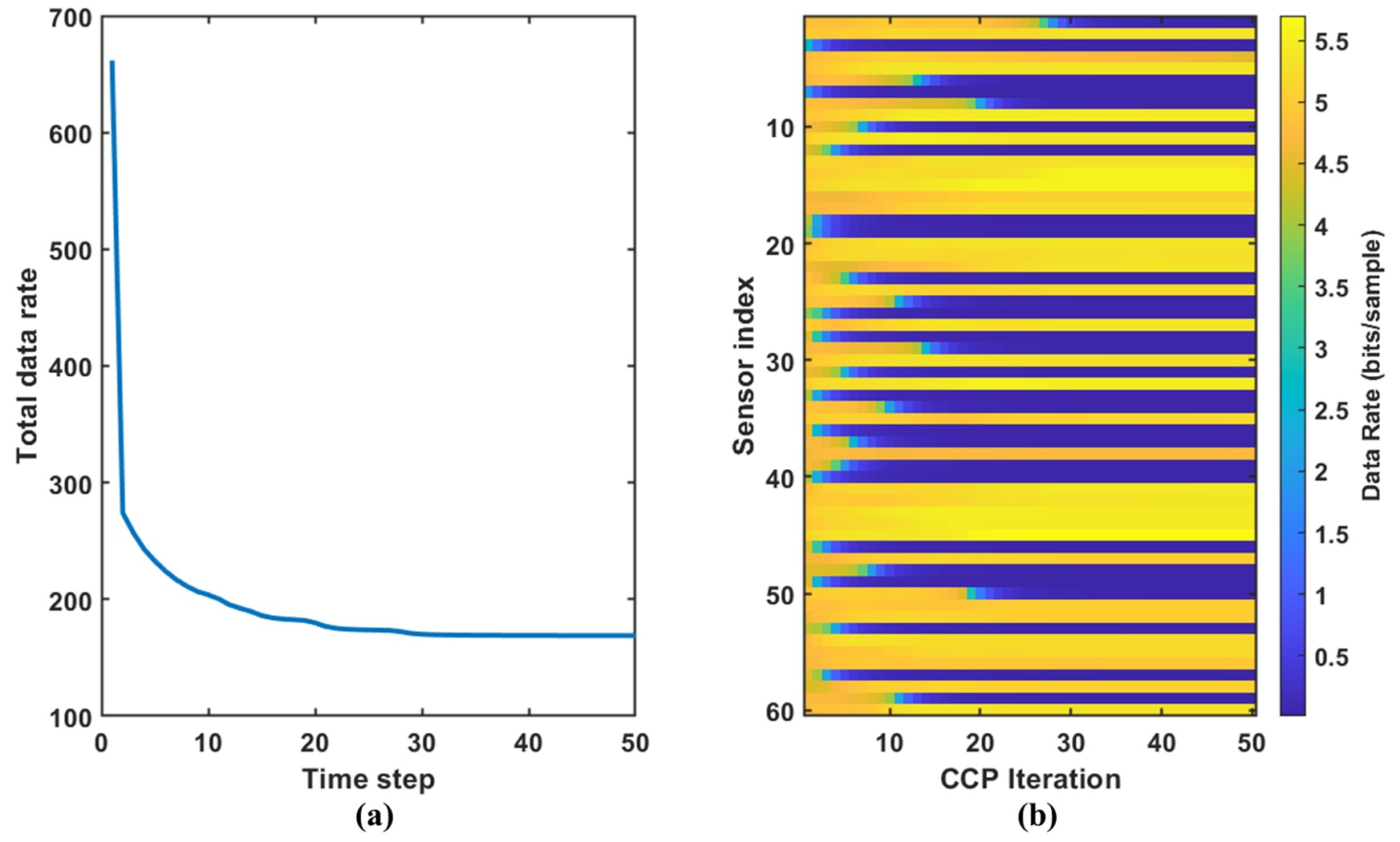}
    \caption{These plots show optimized data rates found after a given number of CCP iterations with a MSE constraint of $\beta=0.1$. In plot (a), the total data rate is illustrated. Plot (b) illustrates the data rates allocated to individual sensors. The data rates are represented as color-coded blocks.}
    \label{fig:CCP-ADMM01}
\end{figure}

\begin{figure}[t]
\begin{minipage}[b]{0.50\linewidth}
\centering
    \includegraphics[width=\columnwidth]{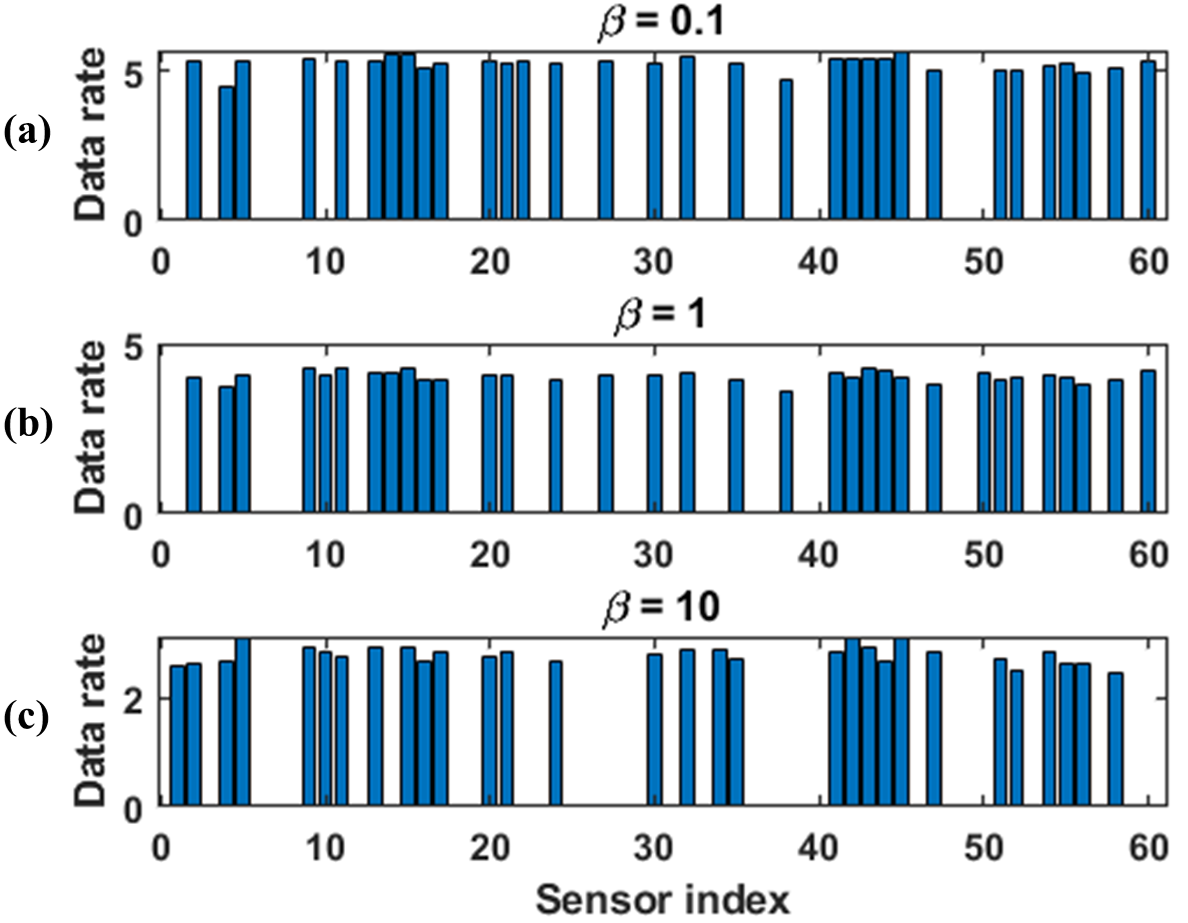}
    \caption{Data rate allocation obtained by CCP with (a) $\beta=0.1$ (b) $\beta=1$ (c) $\beta=10$.}
    \label{fig:final sensor i}
\end{minipage}
\hspace{0.5cm}
\begin{minipage}[b]{0.50\linewidth}
\centering
    \includegraphics[width=\columnwidth]{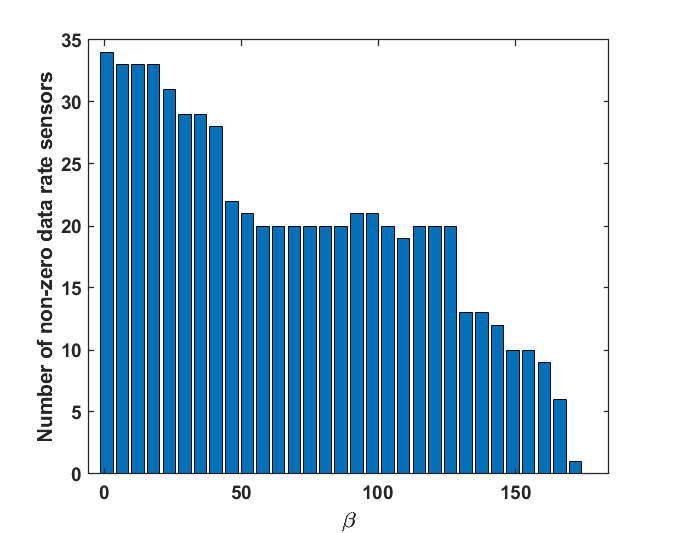}
    \caption{Number of sensors allocated non-zero data rate by CCP tested over the range  $1\leq \beta \leq 180$.}
    \label{fig:nonzero}
\end{minipage}
\end{figure}

Fig.~\ref{fig:final sensor i} shows the data rate allocation obtained at convergence (i.e. after a sufficient number of CCP iterations) for three different values of $\beta$. We observe that a similar subset of sensors are given non-zero data rates for all three $\beta$ values. As one might expect, the allocated data rates to individual sensors decrease as $\beta$ increases. Increasing $\beta$ corresponds to a less stringent demand on the performance of the fusion center's estimator MSE; a less demanding MSE constraint means that satisfactory performance can be achieved with less bitrate. Moreover, it is seen that more sensors are allocated zero data rate as $\beta$ increases.

To confirm this last observation, Fig.~\ref{fig:nonzero} illustrates the number of sensors allocated non-zero data rate after a sufficient number of CCP iterations as $\beta$ is varied from $1$ to $180$. The number of sensors tends to decrease, but is somewhat stagnant in the middle of the beta range. The relationship is not necessarily monotone.
This plot demonstrates a sparsity-promoting property of the proposed data rate allocation method.

\subsection{Target tracking by a drone swarm}
\label{drone-track-tgt}
Next, we consider the problem tracking multiple moving targets using a radar signal and multiple drones. As in \citen{zhan2005}, we consider a scenario in which a base station, which also serves and the data fusion center, illuminates the set of targets with a radar signal. A set of drones, nominally the sensor platforms, make time-delay and Doppler measurements of the radar signal reflected by the targets. The drones transmit these measurements back to the data fusion center, where the targets' positions are estimated by an Extended Kalman Filter (EKF).
Unlike the previous example, the system ``$C$'' matrix in \eqref{eq:measurements} is time-varying as the relative positions of the drones with respect to the targets change over time. Therefore, we consider an approach to recalculate the rate allocation at every time step by an repetitive executions of Algorithm~\ref{alg1}. 
To improve computational efficiency, we adopt a ``warm start'' implementation of Algorithm~\ref{alg1} at every time step, i.e., the optimal allocation from the previous time step is used as the initial condition for the CCP iteration in the next time step.

We assume there are five targets in the entire 2D environment. Each target is assumed to be a point mass driven by a random force which is modeled as an i.i.d. Gaussian process noise. The state vector to be estimated in this simulation study is therefore \eqref{eq:source} with
\[
x_t=\begin{bmatrix} x_t^1 & x_t^2 & x_t^3 & x_t^4 & x_t^5 \end{bmatrix}^\top\in\mathbb{R}^{20}
\]
where $x_t^i=\begin{bmatrix} p_{x,t}^i &  p_{y,t}^i &  v_{x,t}^i &  v_{y,t}^i \end{bmatrix}^\top$ are the position and the velocity of the $i$-th target. Let 
\begin{align*}
A_{Target} = \begin{bmatrix}
1 & 0 & \Delta_{t} & 0\\
0 & 1 & 0 & \Delta_{t}\\
0 & 0 & 1 & 0\\
0 & 0 & 0 & 1
\end{bmatrix}.
\end{align*}
where $\Delta_{t}=1$ is the step size. The $A$ matrix for the five target system is given by $A=\text{diag}(A_{\text{Target}},A_{\text{Target}}, A_{\text{Target}}, A_{\text{Target}}, A_{\text{Target}})$. Likewise, let 
\begin{align*}
F_{Target} = \begin{bmatrix}
\sqrt{10} & 0 & 0 & 0\\
0 & \sqrt{10} & 0 & 0\\
0 & 0 & 1 & 0\\
0 & 0 & 0 & 1
\end{bmatrix}.
\end{align*} The $F$ matrix for the five target system is defined as  $F=\text{diag}(F_{\text{Target}},F_{\text{Target}}, F_{\text{Target}}, F_{\text{Target}}, F_{\text{Target}})$.

\begin{figure}
    \centering
    \includegraphics[width=0.9\columnwidth]{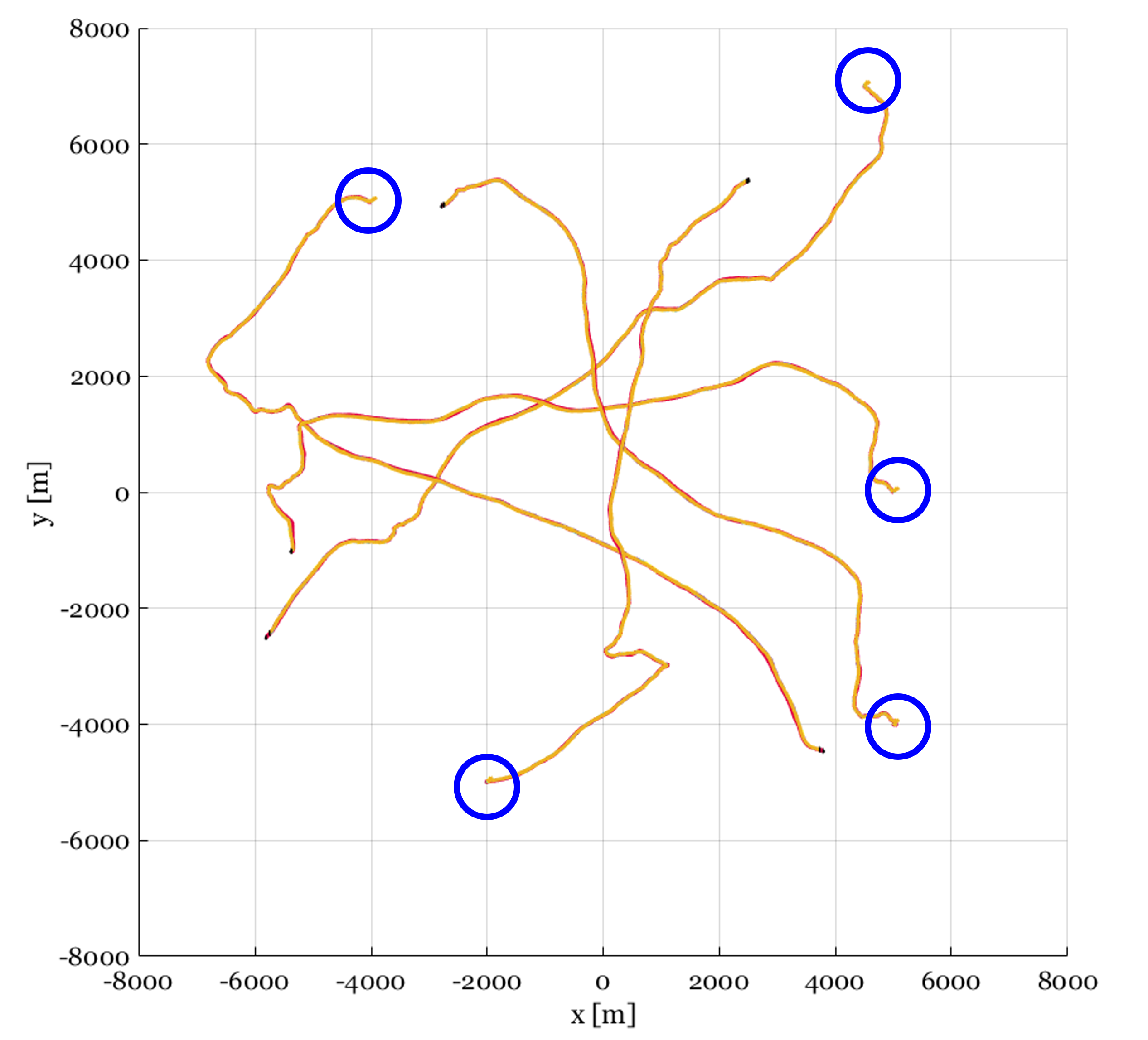}
    \caption{Target tracking simulation result in 5 separate regions. 1 target (red line) and 5 drones (other color lines) are in each region.} 
    \label{fig:5region}
\end{figure}
As shown in Fig.~\ref{fig:5region}, we assume that five drones are available in the neighborhood of each target (so there are 25 drones in total). Each drone is modeled as a point mass in a 2D plane. Let $(p_{x,t}^{i,j}, p_{y,t}^{i,j})$ be the coordinate of the $j$-th drone in the neighborhood of the $i$-th target, and $(v_{x,t}^{i,j}, v_{y,t}^{i,j})$ be its velocity. Drones are controlled by the data fusion center to track the target. In this simulation, we assume that a PD control with acceleration input is used for each drone:
\begin{align}
\label{eq:control}
\begin{split}
a_{x,t}^{i,j} & =K_{P}(\hat{p}_{x,t}^i-p_{x,t}^{i,j})+K_{D}(\hat{v}_{x,t}^i-v_{x,t}^{i,j})-\sum_{k \neq j} L_{P}(p_{x,t}^{i,k}-p_{x,t}^{i,j})-\sum_{k \neq j}L_{D}(v_{x,t}^{i,k}-v_{x,t}^{i,j}) \\ 
a_{y,t}^{i,j} & =K_{P}(\hat{p}_{y,t}^i-p_{y,t}^{i,j})+K_{D}(\hat{v}_{y,t}^i-v_{y,t}^{i,j})-\sum_{k \neq j} L_{P}(p_{y,t}^{i,k}-p_{y,t}^{i,j})-\sum_{k \neq j}L_{D}(v_{y,t}^{i,k}-v_{y,t}^{i,j}).
\end{split}
\end{align}
Here, $(\hat{p}_{x,t}^i,\hat{p}_{y,t}^i,\hat{v}_{x,t}^i,\hat{v}_{y,t}^i)$ is the estimated state of the target $i$ computed by the data fusion center using the EKF described below. The gains $K_P$ and $K_D$ are tuned to keep the drones close to the target, whereas gains $L_P$ and $L_D$ are selected so that drones in the same neighborhood stay away from each other.

At every time step, the base station illuminates the targets with a radar signal. We assume that the reflected signal from the $i$-th target is observed by the drones in the $i$-th region. Each drone in the $i$-th region obtains a two-dimensional measurement of the $i$-th target. The $j$-th drone in the $i$-th region receives Doppler $f_t^{i,j}$ and time delay $\tau_t^{i,j}$ measurements of the reflected radar signal, which are given by \citen{zhan2005}:
\begin{align}
f_t^{i,j}= & \frac{(v_{x,t}^i p_{x,t}^{i}+v_{y,t}^i p_{y,t}^{i})}{\sqrt{(p_{x,t}^{i})^2+(p_{y,t}^{i})^2}} +\frac{(v_{x,t}^i-v_{x,t}^{i,j})(p_{x,t}^i-p_{x,t}^{i,j})+(v_{y,t}^i-v_{y,t}^{i,j})(p_{y,t}^i-p_{y,t}^{i,j})}{\sqrt{(p_{x,t}^i-p_{x,t}^{i,j})^2+(p_{y,t}^i-p_{y,t}^{i,j})^2}}  \label{eq:doppler} \\
\tau_t^{i,j}= & \sqrt{(p_{x,t}^i)^2+(p_{y,t}^i)^2} + \sqrt{(p_{x,t}^i-p_{x,t}^{i,j})^2+(p_{y,t}^i-p_{y,t}^{i,j})^2}. \label{eq:delay}
\end{align}
Since \eqref{eq:doppler} and \eqref{eq:delay} are nonlinear observations of the state vector $x_t$, we linearize them around the current best estimate $(\hat{p}_{x,t}^i,\hat{p}_{y,t}^i,\hat{v}_{x,t}^i,\hat{v}_{y,t}^i)$, where these estimates are recursively computed using the EKF.

\begin{figure}
    \centering
    \includegraphics[width=0.9\columnwidth]{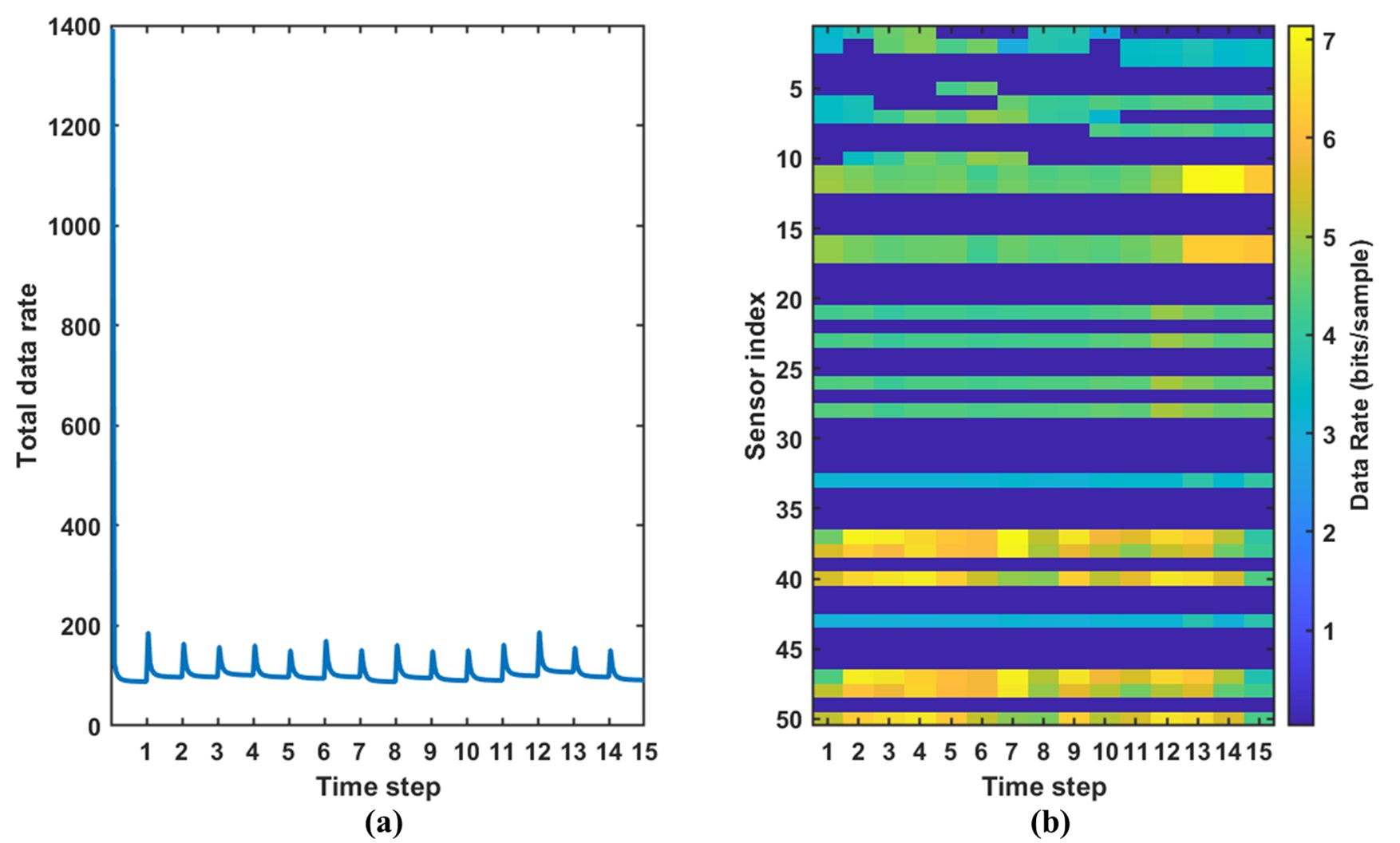}
    \caption{Optimized data rates over 15 time steps, with 15 iterations of the CCP algorithm performed per timestep. The MSE constraint is $\beta=1$. Fig.~\ref{fig:CCP-ADMM-tgt1}~(a) demonstrates the convergence of the CCP algorithm from the warm start solutions---when the time index on the horizontal axis updates from time $t-1$ to time $t$, the optimization is re-started for the system matrices at time $t$. In between times $t$ and $t+1$, we plot the communication cost obtained for fifteen iterations of the CCP algorithm. Fig.~\ref{fig:CCP-ADMM-tgt1}~(b) illustrates the data rates assigned to each sensor at the convergence of the CCP algorithm (nominally 15 iterations) at each time step. The assigned data rates are represented by color-coded blocks.} 
    \label{fig:CCP-ADMM-tgt1}
\end{figure}
Fig.~\ref{fig:5region} shows the scenario described above simulated for 1,000 time steps. Blue circles represent neighborhood regions of each target, and there are five drones in each region.
The target trajectory is presented as red line and drone trajectories in other colors.
Based on the calculated position and velocity of target and drones, we apply the CCP iterations in each time step. Fig.~\ref{fig:CCP-ADMM-tgt1} shows the rate allocation obtained as the CCP iterations converge in each time step. In
Fig.~\ref{fig:CCP-ADMM-tgt1}~(a) we plot the total data rate allocation, while in Fig.~\ref{fig:CCP-ADMM-tgt1}~(b) we illustrate the assigned data rates to individual sensors, at each time step, at the convergence of the CCP algorithm (nominally 15 iterations).  
Note that there are 50 possible sensors, since each of the 25 drones receives both range and Doppler measurements. The result changes over time, reflecting the time-varying nature of the considered problem.

To gain further insight, Fig~\ref{fig:zoomed} shows the data rate allocated to drones in region 1 and their spatial positions at time step $t=70$. Fig~\ref{fig:zoomed}~(a) indicates that non-zero data rates (yellow color block) are given only to sensors 2,3,7 and 8, which correspond to the delay and Doppler measurements obtained from drones 2 and 3. Fig~\ref{fig:zoomed}~(c) indicates that drones 2 and 3 are the left-most and right-most vehicles in the formation.  
\begin{figure}
    \centering
    \includegraphics[width=0.9\columnwidth]{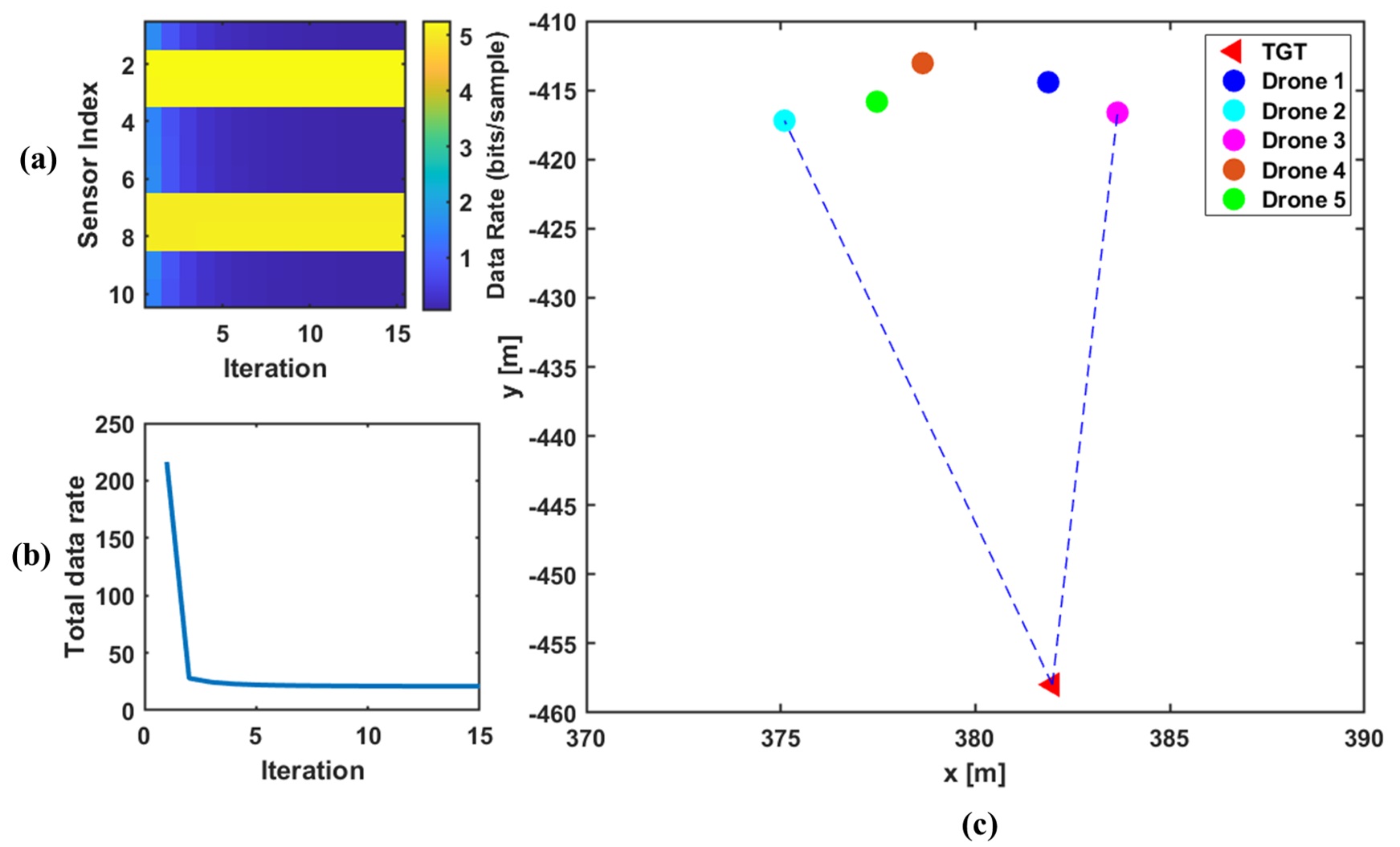}
    \caption{ Data rate allocation in Region 1 at t=70 ($\beta=1$). In (a), we show the data rate allocation to each sensor after a given number of CCP iterations. Indices 1-5 in (a) correspond to the time delay measurements $\tau_t^{1,j}, j=1, ..., 5$, while indices 6-10 correspond to the Doppler estimates $f_t^{1,j}, j=1, ..., 5$. In (b), we plot the total data rate allocation against the number of CCP iterations. In (c) we plot the physical position of the target (TGT) and drones. Dotted lines are drawn from drones allocated a nonzero data rate to the target. It is seen that the two drones allocated nonzero data rates have the largest possible angular spread, and, in a sense, the most ``different" measurements, among pairs of drones. } 
    \label{fig:zoomed}
\end{figure}

\section{Discussion}\label{sec:discussion}
As observed in Section~\ref{sec:Num}, the SRA formulation (\ref{eq:s3ra_inf}) (similarly,  (\ref{eq:s3ra})) admits sparse solutions, i.e., for several sensors we have $\delta_i=0$. In this section, we take a closer look at the mathematical structure of the SRA problem. We first remark that the sparsity-promoting 
nature of the solution can be attributed to the mathematical structure of the communication cost. The communication cost in (\ref{eq:new_prob1_a}) is reminiscent of the \textit{sum-of-logs} regularizer, a widely used heuristic to induce sparse solutions \citen{candes2008enhancing}.  




To develop further insight, we consider a special case of sensor rate allocation problem for which the closed-form solution is available. Consider a scalar system described by 
\begin{align*}
    \bx_{t+1}=& a \bx_t +f \bw_t, \quad \bw_t \stackrel{i.i.d.}{\sim} \mathcal{N}(0,1),
\end{align*}
and a single measurement   
\begin{align*}
\boldeta_t =& \bx_t + \bv_t, \quad \bv_t \stackrel{i.i.d.}{\sim} \mathcal{N}(0,V_t).
\end{align*}
Since the system is observable, $P\triangleq \lim_{t \rightarrow \infty} P_{t|t}$  exists and is computed by the algebraic Riccati equation (ARE)
\begin{equation}
\label{eq:ARE}
    P^{-1}= (a^2P+f^2)^{-1}+V^{-1},
\end{equation}
where $V \triangleq \limsup_{t\rightarrow \infty} V_t$. The time-invariant rate allocation problem (\ref{eq:s3ra_inf}) for this  system is 
\begin{equation}
 \label{eq:SRA_scalar}
 \begin{split}
     \min_{P,V \geq 0} \quad & \log (a^2+\frac{f^2}{P})\\
     \text{s.t.} \quad & P \leq \beta\\
     &P^{-1}= (a^2P+f^2)^{-1}+V^{-1}.
 \end{split}
 \end{equation}
 Denote by $P=g(V)$ the unique positive solution to the ARE (\ref{eq:ARE}). It can be shown that $g(V)$ is a strictly increasing, and thus invertible, function of $V$ with a bounded range $[g(0)=0,g(\infty)= \frac{f^2}{1-a^2}]$. Therefore, problem (\ref{eq:SRA_scalar}) can be equivalently written as 
\begin{equation}
\label{eq:equi_prob}
 \begin{split}
     \min_{P\geq 0} \quad &  \log (a^2+\frac{f^2}{P})\\
     \text{s.t.} \quad &  P \leq \beta \\
      &P \leq \frac{f^2}{1-a^2}.  
 \end{split}
 \end{equation}
 It is straightforward to verify that the minimizer of (\ref{eq:equi_prob}) is $P^*= \min \{\beta, \frac{f^2}{1-a^2}\}$
 and the optimal value of (\ref{eq:equi_prob}) is $\min\{0,\; \log(a^2+\frac{f^2}{\beta})\}$. The $P^*$ is obtained by adopting $V^*=g^{-1}(P^*)$. The condition $P^*=\frac{f^2}{1-a^2}$, leading to $V^*=\infty$, corresponds to the condition when it is optimal to allot zero data rate to the sensor. This partly explains the sparse nature of the solutions we observed in Section~\ref{sec:Num}. It is worth noting that problem (\ref{eq:equi_prob}) is equivalent to the scalar infinite-horizon sequential rate-distortion problem studied in \citen{tanaka2016semidefinite,tatikonda2004stochastic}. 

\section{Conclusion}
\label{sec:conclusion}
In this paper, we considered a sensor data rate allocation problem for dynamic sensor fusion over resource restricted communication networks. 
Using a system model motivated by practical remote sensing scenarios and well-understood methods of zero delay source coding, we proposed a data rate allocation problem between a group of remote sensors and a fusion center. We reformulated this problem using information-theoretic tools and ideas from Kalman filtering. Our proposed optimization took the form of a difference-of-convex program. We applied the CCP algorithm to find a heuristic solution. We conducted a numerical study on a 1D heat transfer model and on a 2D target tracking by drone swarm scenario. Our numerical results suggest that the proposed approach is sparsity-promoting. By considering a a limiting case of the SRA problem that admitted an analytical solution, we gained insight into the sparsity-promoting property.

Possible directions for future research include analyzing the non-convexity of the SRA problem and developing computationally efficient algorithms to a perform the minimization. In particular, when considering the infinite horizon sensor resource allocation problem, we restricted our optimization to time-invariant rate allocations (i.e. time-invariant quantizer sensitivities). The existence (or lack thereof) of time-varying (e.g., periodic) solutions that outperform optimal time-invariant solutions is an interesting question that presents an opportunity for future work. In \citen{tanakaSingleLetter}, the optimality of time-invariant solutions is established for a related (but not equivalent) problem called the sequential rate-distortion (SRD) problem. The convexity of the multi-stage SRD problem allows one to effectively "single-letterize" the the optimization. Unfortunately, in our current setting, the multi-stage SRA problem (e.g., equation (\ref{eq:prob1})) is not a convex program. This makes it difficult to apply the proof techniques from \citen{tanakaSingleLetter} to the present setup. 

Extensions to unreliable networks are attractive avenues for future research. Notably, our notion of airtime in Section \ref{ssec:phy}  relied infinite blocklength expressions for channel capacity. It would be interesting to incorporate modern tools from finite blocklength information theory (cf. \citen{polyanskiy}) to better quantify ``real world" communication costs and notions of reliability. Likewise, our numerical simulations in Section \ref{sec:Num} assumed reliable communication channels. Incorporating realistic physical layer channel impairments into simulations like those in Section \ref{sec:Num} presents another exciting opportunity for future investigations. Finally, it may also be possible to discover a more illuminating connection to the CEO problem. 

\section*{Acknowledgements}
This work was supported by the Defense Advanced Research Projects Agency (DARPA) under Grant D19AP00078 and National Science Foundation (NSF) under Award 1944318. 

\section*{CONFLICT OF INTEREST}
The authors declare no potential conflict of interests.

\section*{Data Availability Statement}
The data that support the findings of this study are available from the corresponding author upon reasonable request.

\bibliography{wileyNJD_AMA}



\appendix
\section{.\ Proof of Lemma 1\label{app1}}

\label{proof:lem:rate}
Recall that $\boldeta_{i,t}=\mathbf{q}_{i,t}-\boldxi_{i,t}$. Thus 
\begin{align}
    H(\mathbf{q}_{i,t}|\boldxi_{i,t})=H(\boldeta_{i,t}|\boldxi_{i,t})
\end{align} Thus, by (\ref{eq:entropy}), we have
\begin{align}\label{eq:newentropy}
    H(\boldeta_{i,t}|\boldxi_{i,t})\leq \mathbb{E}(\boldell_{i,t}) < H(\boldeta_{i,t}|\boldxi_{i,t})+1.
\end{align} Under the joint distribution of $\boldtheta_{i,t}$ and $\boldeta_{i,t}$ induced by the model in Fig.~\ref{fig:channel1} it can be shown that (cf.  \citen{zamir1992},  
[\citem{tanaka2016}, Lemma 1]) 
\begin{align}\label{eq:entropyMIeq}
    H(\boldeta_{i,t}|\boldxi_{i,t})=I(\boldtheta_{i,t};\boldeta_{i,t}).
\end{align} The results of the previous section show that the joint distribution of $\boldtheta_{i,t}$ and $\boldeta_{i,t}$ induced by the model in Fig.~\ref{fig:channel2} is equivalent to the joint distribution induced by Fig.~\ref{fig:channel1}. Substituting (\ref{eq:entropyMIeq}) into (\ref{eq:newentropy}) completes the proof. 

\section{. \ Proof of Lemma 2\label{app2}}

\label{proof:lem:silva}
To see the upper bound, first expand the mutual information in terms of differential entropy
\begin{align}\label{eq:mideng}
I(\boldtheta_{i,t};\boldeta_{i,t}) &= h(\boldeta_{i,t})-h(\boldeta_{i,t}|\boldtheta_{i,t}) \\
&=h(\boldeta_{i,t})-h(\boldeta_{i,t}-\boldtheta_{i,t}|\boldtheta_{i,t})\label{eq:dedef} \\
&=h(\boldeta_{i,t})-h(\bv_{i,t})\label{eq:deind},
\end{align}where (\ref{eq:dedef}) follows from the definition of $\boldeta_{i,t}$ and (\ref{eq:deind}) follows since $\bv_{i,t}$ and $\boldtheta_{i,t}$ are independent (cf. below (\ref{eq:last22})).
Likewise 
\begin{align}\label{eq:mideg}
    I(\boldtheta^\text{G}_{i,t};\boldeta^{\text{G}}_{i,t})= h(\boldeta_{i,t}^\text{G})-h(\bv_{i,t}^\text{G}).
\end{align}

Gaussian random variables have the maximum entropy among all random variables with the same variance (cf. [\citem{cover2012elements}, Theorem 8.6.5]). Thus, we have
\begin{align}\label{eq:gei}
    h(\boldeta_{i,t})\le h(\boldeta_{i,t}^\text{G}).
\end{align}
Furthermore, by applying the definition of differential entropy, expanding the PDF of $\bv_{i,t}^\text{G}$, and recalling that $\bv_{i,t}^\text{G}$ and $\bv_{i,t}$ have the same variance, we have 
\begin{align}\label{eq:kltrick}
    h(\bv_{i,t})&=h(\bv_{i,t}^\text{G})-D(\bv_{i,t}\|\bv_{i,t}^\text{G}) \\
    &=h(\bv_{i,t}^\text{G})-\frac{1}{2}\log\frac{2\pi e}{12}.
\end{align}
Thus, recalling both (\ref{eq:mideg}) and (\ref{eq:mideng}), and subtracting both sides of (\ref{eq:kltrick}) from the respective sides of (\ref{eq:gei}) gives
\begin{align*}
    I(\boldtheta_{i,t};\boldeta_{i,t})&\leq h(\boldeta_{i,t}^\text{G})-h(\bv_{i,t}^\text{G})+\frac{1}{2}\log\frac{2\pi e}{12} \nonumber \\
    &=I(\boldtheta_{i,t}^\text{G};\boldeta_{i,t}^\text{G})+\frac{1}{2}\log\frac{2\pi e}{12}. 
\end{align*}
On the other hand, the lower bound follows from [\citem{silva2009}, Lemma C.1].

\section{. \ Proof of Proposition 1.}\label{app4}
\label{proof:lem:ineq}
As the first step of the proof, we show by induction that for $t=1, ... , T$, we have  
\begin{subequations}
\label{eq:claim}
\vspace{-1ex}
\begin{align}
    &Q_{t|t}^{**}\succeq Q_{t|t}^{*}. \label{eq:claim_a} \\
    &Q_{t+1|t}^{**}\succeq Q_{t+1|t}^{*}. \label{eq:claim_b}
\end{align}
\end{subequations}
For the initial step, \eqref{eq:claim_a} trivially holds  as $Q_{1|1}^{**} = Q_{1|1}^*$  by construction. 
Note that $Q_{2|1}^{**-1} = AQ_{1|1}^{**-1}A^\top+FF^\top$ by construction, and $Q_{2|1}^{*-1}\succeq AQ_{1|1}^{*-1}A^\top+FF^\top$ 
since $Q_{1|1}^*$ and $Q_{2|1}^*$ are feasible solution to \eqref{eq:prob1_f}. Thus $Q_{2|1}^{**-1} \preceq Q_{2|1}^{*-1}$ or equivalently $Q_{2|1}^{**} \succeq Q_{2|1}^{*}$.
Therefore, relation \eqref{eq:claim} holds for $t=1$. For the induction step, we assume \eqref{eq:claim} holds for $t=k(\geq 1)$:
\begin{subequations}
\label{eq:claim_k}
\begin{align}
    &Q_{k|k}^{**} \succeq Q_{k|k}^{*}, \label{eq:claim_k1}\\
    &Q_{k+1|k}^{**} \succeq Q_{k+1|k}^{*}, \label{eq:claim_k2}
\end{align}
\end{subequations}
and we show that \eqref{eq:claim} holds for $t=k+1$. From 
\eqref{eq:prob1_e} and \eqref{eq:new_prob1_e}, we have
\begin{align*}
Q_{k+1|k+1}^{**}&=Q_{k+1|k}^{**}+\sum\nolimits_{i=1}^M \delta_{i,k+1}^{*}C_i^\top C_i \text{ and } \\
Q_{k+1|k+1}^{*}&=Q_{k+1|k}^{*}+\sum\nolimits_{i=1}^M \delta_{i,k+1}^{*}C_i^\top C_i
\end{align*}
Thus, \eqref{eq:claim_k2} directly implies
\begin{equation}
\label{eq:claim1a}
    Q_{k+1|k+1}^{**} \succeq Q_{k+1|k+1}^{*}.
\end{equation}
Substituting $Q_{k+1|k+1}^{**}$ into 
\eqref{eq:prob1_e}, we have
\begin{subequations}
\label{eq:claim_chain}
\begin{align}
Q_{k+2|k+1}^{**-1}&=AQ_{k+1|k+1}^{**-1}A^\top + FF^\top \label{eq:claim_chain1}\\
&\preceq AQ_{k+1|k+1}^{*-1}A^\top + FF^\top  \label{eq:claim_chain2} \\
&\preceq Q_{k+2|k+1}^{*-1}, \label{eq:claim_chain3}
\end{align}
\end{subequations}
where \eqref{eq:claim_chain2} is due to \eqref{eq:claim1a}. The last inequality holds since 
$(Q_{k+1|k+1}^*, Q_{k+2|k+1}^*)$ is a feasible solution satisfying \eqref{eq:new_prob1_e}. \eqref{eq:claim_chain} implies
\begin{equation}
\label{eq:claim1b}
Q_{k+2|k+1}^{**}\succeq Q_{k+2|k+1}^{*}.
\end{equation}
Inequalities \eqref{eq:claim1a} and \eqref{eq:claim1b} establish the relation \eqref{eq:claim} for $t=k+1$ and thus for any $t=1: T$. This completes the first step of the proof.

The proof can be completed as follows. Let $J_2^*$ be the optimal value of \eqref{eq:new_prob1} attained by  $[\psi_{i,t}^{*}, \delta_{i,t}^{*}, \gamma_{i,t}^{*}, S_t^{*}, Q_{t|t}^{*}, Q_{t|t-1}^{*}]$. From \eqref{eq:claim}, we have
\begin{align*}
&0\preceq
\begin{bmatrix}
    \delta_{i,t}^{*}-\gamma_{i,t}^{*} & \delta_{i,t}^{*}C_i \\
C_i^\top \delta_{i,t}^{*} & Q_{t|t-1}^{*}+C_i^\top \delta_{i,t}^{*} C_i
\end{bmatrix} \preceq
\begin{bmatrix}
     \delta_{i,t}^{*}-\gamma_{i,t}^{*} & \delta_{i,t}^{*}C_i \\
C_i^\top \delta_{i,t}^{*} & Q_{t|t-1}^{**}+C_i^\top \delta_{i,t}^{*} C_i
\end{bmatrix}
\end{align*}
and
\begin{align*}
 0\preceq
\begin{bmatrix}
 S_t^{*} & I \\ I & Q_{t|t}^{*}
\end{bmatrix} \preceq \begin{bmatrix}
 S_t^{*} & I \\ I & Q_{t|t}^{**}
\end{bmatrix}.   
\end{align*}
Moreover, $(Q_{t|t}^{**},Q_{t|t-1}^{**})$ satisfies the equality constraints \eqref{eq:definition} by construction.
This means that $[\psi_{i,t}^*, \delta_{i,t}^{*}, \gamma_{i,t}^{*}, S_t^{*}, Q_{t|t}^{**}, Q_{t|t-1}^{**}]$ is a feasible solution of equation \eqref{eq:prob1} and it obtains the value $J_2^*$. Therefore, $J_1^* \leq J_2^*$. However, \eqref{eq:prob1} has more strict constraint than \eqref{eq:new_prob1} which immediately implies $J_1^* \geq J_2^*$. Therefore, $J_1^* = J_2^*$.

\end{document}